\newtheorem{theorem}{Theorem}[section]
\newtheorem{proposition}[theorem]{Proposition}
\newtheorem{corollary}[theorem]{Corollary}
\newtheorem{remark}[theorem]{Remark}
\newtheorem{remarks}[theorem]{Remarks}
\newtheorem{example}[theorem]{Example}
\newtheorem{examples}[theorem]{Examples}
\newcommand{\supp}{{\rm supp}}
\newcommand{\dist}{{\rm dist}}
\newcommand{\dbar}{{\overline\partial}}
\newcommand{\ddbar}{{\partial\overline\partial}}
\newcommand{\C}{\mathbb{C}}
\newcommand{\R}{\mathbb{R}}
\renewcommand{\P}{\mathbb{P}}
\title{Pfaff systems, currents and hulls}
\author{Nessim Sibony}
\begin{document}

\maketitle

\begin{abstract}
Let  $S $ be a  Pfaff system of dimension 1, on  a compact complex manifold $M$.
  We prove that there is a positive $\ddbar$-closed
 current $T$ of bidimension $(1,1)$ and of mass $1$  directed by the Pfaff system $S.$ There is  no  integrability assumption. 
 We also show that local singular solutions always exist. Under a transversality assumption of $S$
  on the boundary of an open set $U,$ we prove the existence in $U$ of positive $\ddbar$- closed
  currents directed by $S$ in $U.$
 
 Using  $i\ddbar$-negative currents, we discuss Jensen measures,
 local maximum principle and hulls with respect to a  cone  $\mathcal P$ of smooth functions
 in the Euclidean complex space, subharmonic in some directions. The case  where $\mathcal P$
 is the  cone of plurisubharmonic  functions is  classical. We use the results to describe the harmonicity properties of  the solutions of  equations of homogeneous, Monge-Amp\`ere type. We also discuss extension  problems of positive directed currents.
  \end{abstract}

\noindent
{\bf Classification AMS 2010:} Primary: 37A30, 57R30;  Secondary: 58J35, 58J65, 60J65.

\noindent
{\bf Keywords:}  Pfaff systems, hulls, directed current


 \section{Introduction} \label{intro}

 
 Positive currents play  an important role in complex  dynamics, especially in the  theory of singular foliations
 by Riemann surfaces. The analog of invariant measures for  discrete dynamical  systems turns out to be the positive  $\ddbar$-closed
 currents directed by the foliation. The  existence of such currents is proved  in \cite{BerndtssonSibony}.  See also  \cite {S}, \cite{FornaessSibony}
 and \cite{DinhNguyenSibony}. 
 
 Surprisingly  enough, it turns out that  such currents
  do exist without  any integrability condition.

 \begin{theorem}\label{T:Pfaff_0}
Let $(M,\omega)$ be a compact complex Hermitian manifold  of dimension $k.$ Let $(\alpha_1,\ldots ,\alpha_{k-1})$ be continuous $(1,0)$-forms on $M.$ Then, there exists  a positive current $T$ on $M,$
 of mass $1$ and of bidimension $(1,1),$  such that $T\wedge \alpha_j=0$ for $j=1,\ldots, k-1,$
and $i\ddbar T=0.$  
\end{theorem}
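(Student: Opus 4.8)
The plan is to obtain $T$ by a Hahn--Banach duality, reducing its existence to a maximum principle. Let $\mathcal K$ be the set of positive currents $T$ on $M$ of bidimension $(1,1)$ that are directed by the system (i.e. $T\wedge\alpha_j=0$ for $j=1,\dots,k-1$) and normalized by $\langle T,\omega\rangle=1$, so that $T$ has mass $1$. The operator $T\mapsto i\ddbar T$ is linear and weak-$*$ continuous, so it suffices to prove that the origin lies in the image $\mathcal I:=i\ddbar(\mathcal K)$. The guiding point is that this is precisely dual to the assertion that \emph{no smooth real function on $M$ can be strictly subharmonic along $S$ at every point}, which must fail because $M$ is compact.

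The first step is to verify that $\mathcal K$ is non-empty, convex and weak-$*$ compact. Convexity is immediate. For non-emptiness, observe that at any $x_0\in M$ the $k-1$ forms $\alpha_1,\dots,\alpha_{k-1}$ cannot span the $k$-dimensional space $T^{*1,0}_{x_0}M$, so $\bigcap_j\ker\alpha_j(x_0)$ contains a complex line $\ell$; writing $[\ell]$ for a positive $(1,1)$-vector at $x_0$ determined by $\ell$, the current $\langle\omega,[\ell]\rangle^{-1}[\ell]\,\delta_{x_0}$ is directed and normalized, hence lies in $\mathcal K$. Compactness comes from the uniform mass bound (Banach--Alaoglu) together with the fact that, the $\alpha_j$ being continuous, the constraints $T\wedge\alpha_j=0$ and $\langle T,\omega\rangle=1$ are weak-$*$ closed. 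Consequently $\mathcal I=i\ddbar(\mathcal K)$ is the image of a compact convex set under the continuous linear map $i\ddbar$, hence convex and weak-$*$ compact in $\mathcal D'(M)$; explicitly $\langle i\ddbar T,u\rangle=\langle T,i\ddbar u\rangle$, with $|\langle T,i\ddbar u\rangle|\le C\|u\|_{C^2}$ for $T\in\mathcal K$.

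Next I would argue by contradiction. If $0\notin\mathcal I$, then, $\mathcal I$ being convex and compact for the topology $\sigma(\mathcal D'(M),C^\infty(M))$ whose continuous dual is $C^\infty(M)$, Hahn--Banach separation provides a real $u\in C^\infty(M)$ and $\varepsilon>0$ with $\langle T,i\ddbar u\rangle\ge\varepsilon$ for all $T\in\mathcal K$ (replace $u$ by $-u$ if necessary). Evaluating on the Dirac-type elements $\langle\omega,[\ell]\rangle^{-1}[\ell]\,\delta_x$ with $\ell\subset S_x$ yields
\[ \langle i\ddbar u,[\ell]\rangle\ \ge\ \varepsilon\,\langle\omega,[\ell]\rangle\ >\ 0\qquad\text{for every }x\in M\text{ and every line }\ell\subset S_x , \]
i.e. $u$ is strictly subharmonic along $S$ everywhere. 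But let $x_0$ be a point where $u$ attains its maximum. There $du(x_0)=0$, so the number $\langle i\ddbar u,[\ell]\rangle$ at $x_0$ is a positive multiple of $\mathrm{Hess}\,u(X,X)+\mathrm{Hess}\,u(JX,JX)$, where $X$ is a real vector spanning $\ell$ in $T_{x_0}M$ and $J$ is the complex structure; since $\mathrm{Hess}\,u(x_0)\le 0$ this is $\le 0$ for every complex line $\ell$, while $S_{x_0}\neq 0$ (again because $k-1<k$). This contradicts the displayed inequality, so $0\in\mathcal I$: some $T\in\mathcal K$ satisfies $i\ddbar T=0$, which is the theorem.

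I expect the step requiring the most care to be the functional-analytic framing of the middle paragraphs --- picking a topology in which $\mathcal I$ is compact and its separating functionals are genuine functions (smooth, or at least $C^2$), and passing from ``$\langle T,i\ddbar u\rangle\ge\varepsilon$ for all normalized directed $T$'' to the pointwise inequality by testing on the extreme, Dirac-type members of $\mathcal K$. After that, the contradiction is merely the classical maximum principle for the complex Hessian; it is worth stressing that no integrability hypothesis on $S$, and no more than continuity of the $\alpha_j$, enters anywhere. As an alternative one could construct $T$ directly as an invariant probability measure for the (possibly degenerate) leafwise heat diffusion generated by $u\mapsto$ the trace of $i\ddbar u$ along $S$, in the spirit of Garnett's harmonic measures and of \cite{BerndtssonSibony}; this, however, needs some regularity theory for degenerate parabolic operators that the duality argument above bypasses.
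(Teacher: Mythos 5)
Your proof is correct and follows essentially the same route as the paper's: the same convex, weak-$*$ compact set of normalized directed currents, the same Hahn--Banach separation producing a smooth real $u$ with $\langle T,i\ddbar u\rangle\ge\varepsilon$ on that set, and the same contradiction at a maximum point of $u$ on the compact manifold. The only (harmless) variations are that you separate $0$ from the image $i\ddbar(\mathcal K)$ rather than separating $\mathcal K$ from the subspace of $i\ddbar$-closed currents, and that you conclude by the second-derivative test at the critical point instead of restricting $u$ to a holomorphic disc tangent to $\bigcap_j\ker\alpha_j$ --- which is exactly what the paper's disc argument reduces to in this smooth setting with no pluripolar exceptional set.
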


  The proof is  a  variation of the  proof in   \cite{BerndtssonSibony}. This  should  be the first step in order to study the  global
  dynamics of some Pfaff systems.
  
  In the second  part of the  article, we revisit some results  about polynomial  convexity and hulls from \cite{DuvalSibony}.
  We develop the study of convexity in the  context of  $\Gamma$-directed currents. The main new tool  is a refinement of a maximum principle proved in
  \cite{BerndtssonSibony} for plurisubharmonic functions with respect to a  positive current $T,$  such that $i\ddbar T\leq 0.$ 
  
For simplicity  we study the convexity  theory mostly  in $\C^k.$ For $z\in\C^k,$
  let $\Gamma_z$ be a closed cone  of $(1,0)$-vectors and assume  $\Gamma:=\cup_{z\in\C^k} \Gamma_z$ is  closed.
  
  Let $\mathcal P_\Gamma$ denote the cone of smooth functions $u,$ such that  for all $\xi_z\in\Gamma_z,$ $$\langle i\ddbar u(z), i\xi_z\wedge \bar\xi_z  \rangle\geq 0.$$  When for all $z,$ $\Gamma_z$  is the cone  of all  $(1,0)$-vectors, we get  $\mathcal P_0$ the cone
  of smooth plurisubharmonic functions (psh for short).
  
  It is natural to introduce  the polar  of  $\mathcal P_\Gamma,$  i.e. the positive  currents $T,$ of bidimension $(1,1),$  such that for every cutoff function $\chi$, and for every $\ \text{for every}\  u\in\mathcal P_\Gamma ,\ \langle \chi T, i\ddbar u\rangle \geq 0\ .$
  
   Such  currents are the 
  $\Gamma$-directed currents. Notions of positive $\Gamma$-directed currents are  used  in  \cite {S}
  and \cite{BerndtssonSibony}.
  
  There is  a natural  notion  of hull  of a  compact $K$  with respect to $\mathcal P_\Gamma.$
  We define
  $$
  \widehat K_\Gamma:=\left\lbrace z\in\C^k:\ u(z)\leq \sup_K u,\quad\text{for every}\ u\in \mathcal P_\Gamma   \right\rbrace.
  $$
 Let $\mu_z$  be a Jensen measure representing  $z\in  \widehat K_\Gamma,$ with respect to  $ \mathcal P_\Gamma.$ 
   Then, there is a  $\Gamma$-directed current $T_z\geq 0,$ with compact support, such that $i\ddbar T_z=\mu_z-\delta_z.$
  Here $\delta_z$ denotes the Dirac mass at $z.$ This permits to get local singular solutions for Pfaff systems and more generally for  distributions of cones in the tangent bundle. The computation
  $\mathcal P_\Gamma$ hulls appears as a problem in control theory.
  
  The representation of Jensen measures is proved  for $\mathcal P_0$ in  \cite{DuvalSibony}. It has been extended by
  Harvey-Lawson  \cite{HarveyLawson} to hulls in calibrated geometries.
  
  The previous result permits to give a description  of $\widehat K_\Gamma\setminus K$  as a union of supports of currents $T_z.$
  They replace somehow, the analytic varieties  with boundary in  $K,$ which are known  not to  always exist.

We also  prove an  extension  of Rossi's local maximum principle. In the context of polynomial convexity the statement is  as follows.

Let $\widehat K$ denote the polynomial convex hull of $K.$  Let $V$ be a neighborhood of a point 
   $z_0\in \widehat K\setminus K.$ 
Then for every psh function $u,$
$$
u(z_0)\leq  \sup\left\lbrace u(\zeta):\ \zeta\in (\widehat K\cap\partial V)\cup (V\cap K)  \right\rbrace.
$$

Surprisingly  enough, we get a similar  statement for $\mathcal P_\Gamma,$  and a new proof of Rossi's maximum
principle, without  using  any deep  several complex  variables result, as  it is  classical, see \cite{Rossi}  
and \cite{Hoermander}. We use  the maximum principle for appropriate currents. This turns out to be  a   consequence of Green's formula.

To study the above notion of hull, we show that if $\Gamma_z \not=0$ for every $z$ in
an open set $U$, then $U$ is contained in the $\Gamma$-hull of its boundary. As a consequence, under a transversality assumption of  the cones $\Gamma_z$  to $\partial U,$ we prove the existence of  positive $\ddbar$-closed currents directed by $\Gamma$ in $U.$

We then  develop  the Perron  method for  Monge-Amp\`ere  equations in the context  of $\Gamma$-hulls.
  Let $B$ denote the unit ball in $\C^k.$ In the classical context, Bedford and Taylor \cite{BedfordTaylor} proved that any smooth  function $v$ on $\partial B$ can be  extended  as a psh function $u$ in  $B$ and the extension is in   $\mathcal C^{1,1}.$ Moreover,
  $(i\ddbar u)^k=0$ in $B$.  It turns out, that even when $v$ is a real polynomial, the corresponding function $u$
  is not necessarily of class $\mathcal C^{2},$ see \cite{GamelinSibony}. The fact that $u$ is only in $\mathcal C^{1,1},$ is crucial in applications.
  
   It was  observed by the author, long ago, that  for such functions, there is  not always a non-trivial holomorphic disc
 $D_z$ through $z\in B,$  such that 
$u$  restricted to $D_z$ is  harmonic. The example is based on the existence of non-trivial polynomial hulls without analytic structure. 

A recent paper by Ross-Nystr\"om   \cite{RN} describes a large class of geometric examples, with a similar phenomenon.
This raise the following question. Let  $u$ be a  continuous psh function in the  closed unit ball $\bar B$ of $\C^k.$ Assume  $(i\ddbar u)^k=0$ in $B.$ What are the analytic objects on which, the function $u$ is harmonic?
We show here, that  there is always  a Jensen measure $\mu_z$ and a positive current $T_z,$ of bidimension $(1,1),$ such that  $
 i\ddbar T_z=\mu_z-\delta_z 
 $ in $B$
and 
$i\ddbar u\wedge T_z=0.$  So the function $u$ is $T_z$ harmonic.

 In Section 5, we show that a function  which is locally  a  supremum of functions  of $\mathcal P_\Gamma,$ is  globally  a  supremum of such functions. This type of results is  discussed in \cite{GamelinSibony} in the context
of function  algebras.

In the last section, we introduce $\mathcal P_\Gamma$-pluripolar sets and we study extension Theorems for positive  $\mathcal P_\Gamma$-directed currents.

\medskip\noindent
{\bf Acknowledgements.} It is a pleasure to thank T.C. Dinh and M. Paun for their interest and comments.  
\section{Pfaff equations on complex manifolds}

Denote by $D$ the unit disc in $\C.$  A holomorphic map $\varphi_a:\ D\to M,$ $\varphi_a(0)=a,$ is
called a holomorphic disc. It is non degenerate at $a$ if  the derivative at $0$ is non zero.

\begin{theorem}\label{T:Pfaff}
Let $X$ be 
a compact set in a complex Hermitian manifold $(M,\omega)$ of dimension $k.$ Let $E$ be a  locally pluripolar set  in $M$ (not 
necessarily closed). Assume $\overline{X\setminus E}=X.$
Let $(\alpha_1,\ldots ,\alpha_{m})$ be continuous $(1,0)$-forms on $M.$
Assume that for each $a\in X\setminus E,$ there is  a holomorphic disc,  with image in $X,$
 $\varphi_a:\ D\to X,$ $\varphi_a(0)=a,$ non degenerate at $a,$
such that $(\varphi_a^*\alpha_j)(a)=0,$ for $j=1,\ldots, m.$ Then there exists  a positive current $T,$
supported on $X,$
 of mass $1$ and of bidimension $(1,1),$  such that $T\wedge \alpha_j=0$ for $j=1,\ldots, m,$
and $i\ddbar T=0.$  
\end{theorem}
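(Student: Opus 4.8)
The plan is to follow the duality scheme behind \cite{BerndtssonSibony}, via Hahn--Banach separation and a maximum principle along the discs rather than a direct construction of $T$. Let $\mathcal C$ be the set of positive currents on $M$ of bidimension $(1,1)$, of mass $1$, supported in $X$, with $T\wedge\alpha_j=0$ for all $j$. It is convex, and since $\supp T\subseteq X$ with $X$ compact and $\mass T=\langle T,\omega\rangle$ is weak-$*$ continuous on positive currents, it is weak-$*$ compact. It is nonempty: for $a\in X\setminus E$, with $v_a:=\varphi_a'(0)\ne 0$, the point current $T_a:\gamma\mapsto c_a\,\langle\gamma(a),\,i\,v_a\wedge\bar v_a\rangle$, normalised to mass one, is positive, supported on $\{a\}\subseteq X$, and satisfies $T_a\wedge\alpha_j=0$ because $\alpha_j(a)(v_a)=(\varphi_a^*\alpha_j)(a)=0$. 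Since $T\mapsto i\ddbar T$ is continuous into $\mathcal D'(M)$, the set $i\ddbar\mathcal C$ is compact and convex, and the theorem says exactly $0\in i\ddbar\mathcal C$. Suppose not; Hahn--Banach separation in $\mathcal D'(M)$ (whose continuous dual is $C^\infty(M)$) gives $u\in C^\infty(M,\R)$ and $\varepsilon>0$ with $\langle T,i\ddbar u\rangle\ge\varepsilon$ for all $T\in\mathcal C$. Testing against the point currents above — and, for arbitrary $z\in X$ and $w\in\bigcap_j\ker\alpha_j(z)$, against the analogous normalised point current at $z$ in direction $w$, which also lies in $\mathcal C$ — yields $\langle i\ddbar u(z),\,i\,w\wedge\bar w\rangle\ge\varepsilon|w|^2$ there. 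Equivalently, along every admissible disc $\varphi$ (image in $X$, $(\varphi^*\alpha_j)(0)=0$) one has $\Delta(u\circ\varphi)(0)\ge c_0\varepsilon\,|\varphi'(0)|^2>0$ for a fixed $c_0>0$.

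To contradict this, let $b_*$ realise $M_0:=\max_X u$. If $b_*\in X\setminus E$, then $u\circ\varphi_{b_*}$ is smooth, is $\le M_0$ on $D$ (as $\varphi_{b_*}(D)\subseteq X$) and equals $M_0$ at the interior point $0$, so $\Delta(u\circ\varphi_{b_*})(0)\le 0$ — impossible since $\varphi_{b_*}'(0)\ne 0$. If $b_*\in E$, I would first push the maximum off $E$: since $E$ is locally pluripolar, near $b_*$ there is a plurisubharmonic $\psi$ with $E\subseteq\{\psi=-\infty\}$, while $X$ is not pluripolar near $b_*$ (it contains a non-constant sub-disc of some $\varphi_a$ with $a\in X\setminus E$ near $b_*$). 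Hence on a suitable coordinate ball $B_0$ the function $u_t:=u+t\psi-t|z-b_*|^2$ $(t>0)$ is $\not\equiv-\infty$ on $X\cap B_0$, and — the term $-t|z-b_*|^2$ keeping the maximum away from $\partial B_0$ — attains its maximum over $X\cap\overline{B_0}$ at an interior point $b^t_*$ with $\psi(b^t_*)>-\infty$, so $b^t_*\in X\setminus E$. Running the previous argument along $\varphi_{b^t_*}$ with $u_t$ in place of $u$: write $u_t\circ\varphi_{b^t_*}=g+h$ with $g=(u-t|\cdot-b_*|^2)\circ\varphi_{b^t_*}$ smooth and $h=t\,\psi\circ\varphi_{b^t_*}$ subharmonic; $g+h$ has a maximum at $0$, so the sub-mean-value inequality for $h$ together with Taylor's formula for $g$ forces $\Delta g(0)\le 0$, i.e.\ $\Delta(u\circ\varphi_{b^t_*})(0)\le t\,\Delta\big(|\varphi_{b^t_*}-b_*|^2\big)(0)=4t\,|\varphi_{b^t_*}'(0)|^2$. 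Against $\Delta(u\circ\varphi_{b^t_*})(0)\ge c_0\varepsilon|\varphi_{b^t_*}'(0)|^2$ with $\varphi_{b^t_*}'(0)\ne 0$ this gives $\varepsilon\le Ct$, impossible for $t$ small. So $0\in i\ddbar\mathcal C$, and the bidegree and mass of $T$ follow from $T\in\mathcal C$.

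The real obstacle — presumably where the ``variation of \cite{BerndtssonSibony}'' does its work — is the treatment of $E$ in the last step: arranging that the perturbed maximum genuinely lands in $X\setminus E$ and in the interior of the chosen region while the strict positivity of $\Delta(u\circ\varphi)$ at the centre survives, and, relatedly, controlling the case where the admissible discs through points approaching a point of $E$ degenerate (derivative at the centre tending to $0$), so that a Montel limit of them is constant and carries no information. Both are dealt with using only the local pluripolarity of $E$, Montel's theorem, and elementary estimates for $u\circ\varphi$, but they are the delicate heart of the argument; the Hahn--Banach step, the identity $\Delta(u\circ\varphi)(0)=c_0\,\langle i\ddbar u,\,i\,\varphi'(0)\wedge\overline{\varphi'(0)}\rangle$, and the compactness of $\mathcal C$ are routine.
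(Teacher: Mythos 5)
Your overall strategy is the paper's: set up the compact convex set $\mathcal C$, separate by Hahn--Banach to get $u$ with $\langle T,i\ddbar u\rangle\ge\varepsilon$ on $\mathcal C$, read this off on Dirac currents as strict subharmonicity of $u\circ\varphi$ at the centre of every admissible disc, and contradict the maximum principle after pushing the maximum of $u$ off $E$ by a pluripolarity perturbation. The separation step, the nonemptiness and compactness of $\mathcal C$, and the case $b_*\in X\setminus E$ are all fine. (Your closing worry about Montel limits of degenerating discs is a red herring: once the perturbed maximum sits at a single point of $X\setminus E$, the hypothesis hands you a non-degenerate disc \emph{at that point}, and no limit of discs is ever taken.)

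The gap is in the case $b_*\in E$, and it is a quantifier problem caused by using one parameter $t$ for both perturbations in $u_t=u+t\psi-t|z-b_*|^2$. Your final contradiction $c_0\varepsilon\le 4t$ forces you to \emph{fix $t$ small first} (smaller than $c_0\varepsilon/4$). But for that fixed $t$ the claim that $u_t$ attains its maximum over $X\cap\overline{B_0}$ at an \emph{interior} point is unjustified and can fail: since $\psi$ is upper semicontinuous with $\psi(b_*)=-\infty$, one has $\psi(a)\to-\infty$ as $a\to b_*$ even along $X\setminus E$, so $t\psi$ blows down precisely at the points where $u$ is close to $M_0$; if $u$ attains its maximum on $X$ only at $b_*$, then for small fixed $t$ every interior point may satisfy $u_t\le M_0-t r_0^2$, which is the bound on $X\cap\partial B_0$, and the maximum can sit on the boundary of the ball, where the disc argument has no room. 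The cure is to decouple the roles of the two terms, which is exactly how the paper arranges it: the coefficient of the quadratic term is tied to the separation constant (take $u_1=u-c\varepsilon|z-b_*|^2$ with $c$ small enough that $\Delta(u_1\circ\varphi)(0)\ge \tfrac12 c_0\varepsilon|\varphi'(0)|^2$ still holds along admissible discs), while the pluripolar term enters as $u_2=u_1+s\psi$ with $s>0$ chosen \emph{afterwards}, small relative to one fixed witness point $a_0\in X\setminus E$ near $b_*$ where $u_1(a_0)$ nearly realizes the maximum and $\psi(a_0)$ is finite; this keeps the maximum of $u_2$ strictly inside $B_0$ and off $E$. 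Crucially, the contradiction then needs no smallness of $s$ at all: $s\psi\circ\varphi$ is subharmonic for every $s>0$, so the sub-mean-value inequality added to the strict positivity of $\Delta(u_1\circ\varphi)$ near $0$ already violates the interior maximum at $0$ --- the qualitative argument replaces your inequality $\varepsilon\le Ct$ and removes the conflict between the two uses of $t$.
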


 \begin{proof}
 The difference with Theorem 1.4 in \cite{BerndtssonSibony} is that we do not assume here that the 
 holomorphic disc $\varphi_a$ satisfies $ (\varphi_a^*\alpha_j)(z)=0,$ for every $z$ in $D. $ So we do not assume that the system $S$ has local solutions, which are holomorphic discs.
 
 Recall that $E$ is locally pluripolar, if for every $a\in M,$ there is   a neighborhood $U$ of $a$ and a psh  function 
$v$ in $U$ such that $E\cap U\subset \{z\in U:\  v(z)=-\infty\}.$
Let 
\begin{multline*}
C:=\left\lbrace  T:\  T\geq 0\ \text{bidimension}\ (1,1),\ \text{supported on}\ X,\ \int T\wedge \omega=1,\right.\\
\left.   T\wedge \alpha_j=0,\  1\leq j\leq m   \right\rbrace 
.\end{multline*}
Since  the $\alpha_j$ are continuous, $C$ is a  convex compact set in the space of bidimension $(1,1)$-currents.
Let 
$$ Y:=\left\lbrace i\ddbar u,\  u\ \text{test smooth function on M}   \right\rbrace^\perp.$$  The space
$Y$ is the space of the $i\ddbar$-closed currents.  We have  to show that $C\cap Y$ is  nonempty. Suppose  the contrary. The convex compact set
$C$ is in the dual of a reflexive space. The  Hahn-Banach theorem implies that we can strongly separate $C$ and $Y.$
Hence, there is $\delta >0$  and a  test function  $u$ such that $\langle i\ddbar u  ,T \rangle\geq \delta$ for every $T\in C.$
There is  a point $z_0\in X$  where $u$ attains its maximum. Choose  $r>0,$ such that
$ E\cap B(z_0,r)\subset \{v=-\infty\},$ $v$ being psh in $B(z_0,r).$
Then the function $u_1(z):=u(z)-{(\delta/4)}  |z-z_0|^2$  has a maximum  in $B(z_0,r),$ at $z_0.$
We get  also, that for every Dirac current in $C,$ $\langle i\ddbar u_1 ,T \rangle\geq \delta/4.$ 

For $\epsilon>0$ small enough, $u_2:= u_1+\epsilon v$ will still have   a  maximum at a point 
 $a\in X\setminus E,$ near $z_0.$ Consider a non degenerate holomorphic   disc  with center at $a,$  contained in $X,$  such that 
$\varphi^*_a(\alpha_j(a))=0,$ for $1\leq  j\leq  m.$
So the disc is tangent at $a,$ to the  distribution $(\alpha_j)_{1\leq j\leq m}.$  
Then $u_2\circ\varphi_a$ has  a maximum at $0.$ Since $\varphi_a $ is non degenerate at 
$a,$  $i\ddbar (u_1\circ \varphi_a)>0$ at $0,$ and  hence  in a neighborhood of $0.$  Indeed, we just have to apply the above estimate to a positive Dirac current directed by $ \varphi'_a (0).$ Adding $v,$ preserves the strict subharmonicity. This  contradicts the local maximum principle for the subharmonic function  $u_2\circ\varphi_a$  in a disc.  Hence $C\cap Y\not=\varnothing.$ The theorem is proved.
\end{proof}

\noindent {\it Proof of Theorem \ref{T:Pfaff_0}.}
It is clear that non degenerate, holomorphic discs  in  directions  defined  by  $\alpha_j(a)=0,$ $1\leq  j\leq  k-1,$
always exist.
\hfill $\square$

\begin{remark}\rm
The condition  on the  continuity of  $(\alpha_j)_{1\leq j\leq  k-1}$
can be relaxed. It is   enough  to  assume that $z\mapsto i\alpha_j\wedge \bar\alpha_j$ is  lower-semi continuous (lsc for short)
in the convex salient cone of positive $(1,1)$ forms. Then   if $T_n\in  C$ converge to $T,$
$$
\int T \wedge i\alpha_j\wedge \bar\alpha_j \chi \leq \liminf_n \int T_n \wedge i\alpha_j\wedge \bar\alpha_j \chi=0 
$$
for any positive  cutoff  function $\chi.$ It follows that  the convex set $C$ in the  proof of Theorem  \ref{T:Pfaff},
is  closed.
\end{remark}
\begin{examples}\rm
\noindent 1.  Let $u$ be  a function in $\P^2$ of class  $\mathcal C^2.$ Consider the positive  current $S:= i\partial u\wedge \overline{\partial u}.$ 
It is   directed  by the Pfaff form $\alpha:=\partial u.$ 
It is   shown in \cite{FornaessSibony} that if $u$ is  real  non-constant, then $i\ddbar S$ is not identically zero.
 According to  Theorem \ref{T:Pfaff_0}, there is 
a  current $T$  positive and $\ddbar$-closed, directed by  $ i\partial u\wedge \overline{\partial u}.$ 
This means that  on a set where $ \partial u$  is non-zero, there is a positive  $\sigma$ finite
 measure  $|T| $,
such that on that set
  $$
T=i|T|\partial u\wedge \overline{\partial u}
$$
is  $\ddbar$-closed. In particular if $ i\partial u\wedge \overline{\partial u}$  vanishes only on a finite set of points we get the above representation globally, 

\noindent 2. Let $X$ be  a compact set in $M.$ Assume that $f$ is  a  holomorphic self-map in $M,$  such that $f(X)=X.$
Assume  moreover that  $X$ is  hyperbolic  in the  following dynamical sense. For every $a\in X,$ not contained in a given pluripolar set $E,$ there is  a local holomorphic stable manifold $ W^s_a  $
manifold  of positive dimension, contained in $X$ . The stable manifold $ W^s_a , $ is defined as
$$
W^s_a:=\left\lbrace  z\in M:\  \dist(f^n(z),f^n(a))<C\rho^N,\ N\geq 1  \right\rbrace,
$$
where $\rho <1.$ We assume that $
W^s_a$ contains a  holomorphic disc $ D^s_a$ of positive  radius and that these  discs 
$ D^s_a$ vary continuously.  This  is a  situation  studied in smooth  dynamical systems. The proof of  Theorem  \ref{T:Pfaff} shows that there is $T\geq 0$ directed by these discs  and  $\ddbar$-closed.

An example of this situation is the one of a hyperbolic H\'enon map $f$ in $\C^2$. The set $X $ is the closure in $\P^2$ of  set of points with  bounded orbits. The set $E,$ is just the unique indeterminacy point of the extension of $f$ to $\P^2.$ The situation is studied in detail in  \cite{FS0}.

\noindent 3. Consider  $(M,V,E)$  where $M$ is   a compact complex  manifold, $E$   a closed locally pluripolar set and $V$ is  a  holomorphic sub-bundle of the restriction of the tangent bundle  $T{M}$ to the complement of $E$ in $M$.
 If there is no non-zero positive $\ddbar$-closed current supported on $E$, then, we get positive   bidimension
$(1,1),$   $\ddbar$-closed currents directed by $V.$ They should be useful in the study of the Kobayashi metric.   

\end{examples}

\section{Convexity  and  currents in $\C^k$}

\subsection{$\Gamma$-directed  currents in $\C^k,$  $\Gamma$-hulls}
\label{ss:directed_currents_hulls}
Let  $\mathcal P_0$ denote  the convex   cone  of smooth  psh functions in $\C^k.$
A function $u\in\mathcal P_0$ iff  $\  \langle  i\ddbar u(z), i\xi_z\wedge \bar\xi_z \rangle\geq 0 $  for every $(1,0)$-vector  $\xi_z.$

A function is psh if  $u=\lim\downarrow  u_\epsilon,$  where $u_\epsilon$  are   smooth  psh functions.    We want to study   hulls with respect  to convex  cones  containing $\mathcal P_0.$  We will obtain in this  setting,
new  proofs  of classical results like  Rossi's local maximum principle. All the notions can be developed for an open set  $U\subset \C^k$ or more generally a manifold $N$ admitting a smooth  strictly psh  function $\rho.$

 We now introduce the cone  $\mathcal P_\Gamma.$   For every $z\in \C^k,$ let  $\Gamma_z$ be  a closed cone of $(1,0)$-vectors. We assume that $\Gamma_z$ always contains non-zero vectors and
 that  $\Gamma:=\bigcup_{z\in\C^k}\Gamma_z$  is  closed. Define
$$
\mathcal P_\Gamma:=\left\lbrace u\in\mathcal C^2:\  \langle  i\ddbar u(z), i\xi_z\wedge\bar\xi_z \rangle \geq 0\ \ \text{for every}\  \xi_z\in\Gamma_z \right\rbrace.
$$
The cone   $\mathcal P_\Gamma$ is  convex   and   contains   the cone  of  smooth psh functions.  We define,
$$
\overline{\mathcal P}_\Gamma:=\left\lbrace u:\  u=\lim\downarrow  u_\epsilon,\  u_\epsilon\in  \mathcal P_\Gamma \right\rbrace.
$$ 
We now  consider the   positive currents  of bidimension  $(1,1),$ $\Gamma$-directed. $$
\mathcal P_\Gamma^{\text{0}}:=\left\lbrace T\geq 0, \ \text{of bidimension}\ (1,1),\   T\wedge i\ddbar u\ \geq 0\ \ \text{for every}\  u\in\mathcal P_\Gamma \right\rbrace.
$$
The cone $ \mathcal P_\Gamma^{\text{0}}$ is closed for the weak-topology of currents. It is the closed 
convex hull of the Dirac currents it contains. If $\mathcal P_\Gamma=  \mathcal P_0,$ then $ \mathcal P_\Gamma^{\text{0}}$ are all positive currents. 
In general it is a smaller class of currents.

An important property of the cone 
$\mathcal P_\Gamma$ that  we will use frequently is the following. Let $u_1,...,u_m$ be  functions in  
$
\mathcal P_\Gamma$  and let $H$ be a  smooth function in $ \R^m,$ convex and increasing in each variable. Then $ H(u_1,...,u_m) \in  \mathcal P_\Gamma$. If $H$ is non-smooth, then $ H(u_1,...,u_m) \in \overline{\mathcal P}_\Gamma.$

 Let $T$ be a  positive  $\Gamma$-directed current of bidimension $(1,1).$ A function $v$ is  $T$-subharmonic, if there is a  decreasing
 sequence  $(v_n)$ of smooth functions  such that $i\ddbar v_n\wedge T\geq 0$ and $v_n\to v $ in $L^2(\sigma_T),$ where
 $\sigma_T$ is  the  trace measure  of $T.$ The notion can be localized.
 
 A bounded   function  $u$ is  $T$-harmonic, in an open set $U,$ if there is a  monotone  sequence of smooth functions $u_n,$    
 $|u_n|\leq  C$ in  $U$, such that $\lim\downarrow u_n=u$ or $\lim\uparrow u_n=u,$ and $0\leq  i\ddbar u_n\wedge T\to 0.$ We  will  write
 $   i\ddbar u\wedge T= 0.$
 
 We  will introduce the notion of hull  with respect to  $\mathcal P_\Gamma.$ Let  $K$  be   a  compact set in $\C^k.$ We define  the hull $\widehat K_\Gamma$ of $K$ with respect to $\mathcal P_\Gamma$ as  follows:
 $$
 \widehat K_\Gamma:=\left\lbrace z\in\C^k:\  u(z)\leq \sup_K u\quad \text{for every}\ u\in\mathcal P_\Gamma   \right\rbrace.
 $$
 
 When  $\mathcal P_\Gamma$ is  the cone $\mathcal P_0$ of smooth  psh functions, then $ \widehat K_\Gamma=\widehat K,$ the polynomially
 convex hull of $K.$  We always have that $ \widehat K_\Gamma\subset \widehat K.$ The hull  $ \widehat K_\Gamma$ is  unchanged
 if we replace $\mathcal P_\Gamma$ by $\overline{\mathcal P}_\Gamma.$ 
 
 We  recall  few facts   from Choquet's theory, with some  refinement by Edwards
 \cite{ChoquetMeyer}, \cite{Edwards}. Gamelin \cite{Gamelin} gives an exposition of their results.

On a  compact space $M$,  consider a convex cone $\mathcal E$ of  upper-semicontinuous  (usc for short)  functions  
with values in $[-\infty, \infty). $ We assume that $\mathcal E,$ contains the constants and functions separating points.
 
 A representing measure for $x\in M,$  relatively to $\mathcal E$ is  a  probability measure $\nu_x$ such that
 $\varphi(x)\leq  \nu_x(\varphi)$ for every $\varphi\in\mathcal E.$ It is   clear that  the  set $J_x(\mathcal E)$ of representing  measures
 is  a convex  compact set. The elements of $J_x(\mathcal E)$ are  called  Jensen measures.  Define 
 $$
 \partial_\mathcal E M:=\left\lbrace x\in M:\  J_x(\mathcal E)=\delta_x  \right\rbrace.
 $$ 
 A  result by  Choquet says that every  function in $\mathcal E$ attains its maximum  at a point  of $\partial_\mathcal E M.$ 
 Since  $\mathcal E$ is  convex, there is a  smallest closed set on which every  function  attains its maximum.  It is  called the Shilov 
 boundary $S(M)$ and
 $
 S(M)=\overline{\partial_\mathcal E M}.
 $
 
 It is   easy to check that the set $J_x(\mathcal E)$ is unchanged, if
  we  replace $\mathcal E$, by the  smallest cone containing $\mathcal E$ and  stable by  finite sup. This cone is  also  convex, see  \cite{Edwards}.

\begin{theorem} {\rm (Choquet-Edwards)}  Let $M$ be a compact set and  $\mathcal E$ a  convex cone of continuous  functions from
$M$ to $[-\infty,\infty)$  separating points and  containing the constants.
Let $X$ be  a compact set in $M$   containing the Shilov  boundary  $S(M).$
For  each lsc  function $u:\ X\to (-\infty,\infty],$ define
$$
\hat u(z):=\sup\left\lbrace v(z):\ v\in\mathcal E,\quad v\leq  u\  \text{on}\ X  \right\rbrace.
$$ 
Then
$$
\hat u(z)=\inf \left\lbrace \int ud\mu_z:\ \mu_z\  \textrm{Jensen measure for}\ z\ \text{w.r.t.}\  \mathcal E,\ \textrm{with support in}\ X \right\rbrace.
$$
\end{theorem}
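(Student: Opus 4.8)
The plan is to prove the two inequalities separately. For the inequality $\hat u(z)\le\inf_\mu\int_X u\,d\mu$, where the infimum runs over Jensen measures $\mu$ for $z$ with $\supp\mu\subseteq X$, I would just observe that whenever $v\in\mathcal E$ satisfies $v\le u$ on $X$ and $\mu$ is such a measure, then $v(z)\le\int_X v\,d\mu\le\int_X u\,d\mu$, using $\supp\mu\subseteq X$ in the last step; taking the supremum over $v$ and then the infimum over $\mu$ yields the claim. It is convenient to record here that this set of Jensen measures is non-empty: this is where the hypothesis $X\supseteq S(M)$ is used. Indeed, if it were empty, then by weak-$*$ compactness of the probability measures on $X$ and the additivity of the cone $\mathcal E$ one obtains a single $g\in\mathcal E$ with $\sup_X g<g(z)$, contradicting that $g$ attains its maximum on $\partial_{\mathcal E}M\subseteq X$.

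For the reverse inequality I would argue by contradiction via Hahn--Banach separation. Assume $\hat u(z)<\inf_\mu\int_X u\,d\mu$; then $\hat u(z)$ is finite (the case $\hat u(z)=+\infty$ is already covered by the easy inequality, while $\inf_\mu\int_X u\,d\mu=-\infty$ cannot occur since $\mathcal E$ contains the constant $\inf_X u$), so fix a real $c$ with $\hat u(z)<c<\inf_\mu\int_X u\,d\mu$. In $C(X)\times\R$ consider the convex cone $A:=\{(v|_X,v(z)):v\in\mathcal E\}$ and the convex set $B:=\{f\in C(X):f\le u\}\times[c,\infty)$. The assumption $\hat u(z)<c$ gives $A\cap B=\varnothing$, and $B$ has non-empty interior: $u$ is lower semicontinuous on the compact set $X$, hence bounded below, so $(f_0,c+1)$ with $f_0\equiv\inf_X u-1$ is an interior point of $B$. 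Geometric Hahn--Banach then produces a non-zero functional $\Lambda(f,t)=\int_X f\,d\nu+st$, with $\nu$ a signed Radon measure on $X$ and $s\in\R$, together with a number $\gamma$ with $\Lambda\le\gamma$ on $A$ and $\Lambda\ge\gamma$ on $B$. Since $A$ is a cone through the origin, $\sup_A\Lambda=0$, so $\gamma\ge0$ and $\Lambda\le0$ on $A$; testing $\Lambda\ge\gamma$ on $B$ against $(f_0+g,c)$ with $g\le0$ and $(f_0,c+r)$ with $r\ge0$ forces $\nu\le0$ and $s\ge0$.

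It then remains to extract a Jensen measure. Put $\tilde\nu:=-\nu\ge0$, so that $\Lambda\le0$ on $A$ reads $s\,v(z)\le\int_X v\,d\tilde\nu$ for every $v\in\mathcal E$; taking $v\equiv\pm1$ gives $s=\tilde\nu(X)$, and $s=0$ would force $\tilde\nu=0$ and $\Lambda=0$. Hence $s>0$ and $\mu:=\tilde\nu/s$ is a probability measure on $X$ which, by the same inequality, is a Jensen measure for $z$. On the other hand $\Lambda\ge\gamma\ge0$ on $B$, tested against $(f,c)$ with $f\in C(X)$, $f\le u$, gives $\int_X f\,d\tilde\nu\le sc$; since $u$ is lower semicontinuous on the compact, hence normal, space $X$, it is the pointwise supremum of its continuous minorants, so $\int_X u\,d\tilde\nu=\sup\{\int_X f\,d\tilde\nu:f\in C(X),\ f\le u\}\le sc$. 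Therefore $\int_X u\,d\mu\le c$, contradicting $c<\inf_\mu\int_X u\,d\mu$. This gives $\hat u(z)\ge\inf_\mu\int_X u\,d\mu$, hence equality.

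The steps I expect to require the most care are the three analytic points just used: the non-emptiness of the family of Jensen measures supported in $X$ (exactly where $X\supseteq S(M)$ and Choquet's maximum principle enter), the existence of an interior point of $B$ (which uses lower boundedness of $u$), and the identity $\int_X u\,d\tilde\nu=\sup\{\int_X f\,d\tilde\nu:f\in C(X),\ f\le u\}$ for lsc $u$ and a positive Radon measure. If one prefers to avoid this last fact, an alternative is to prove the theorem first for continuous $u$ and then take limits: write $u$ as an increasing limit of continuous $u_n\le u$ on $X$, apply the continuous case to get $\widehat{u_n}(z)=\inf_\mu\int_X u_n\,d\mu$, and use a Dini-type argument on the weak-$*$ compact convex set of Jensen measures for $z$ to conclude $\inf_\mu\int_X u_n\,d\mu\to\inf_\mu\int_X u\,d\mu$; since $\widehat{u_n}(z)\le\hat u(z)\le\inf_\mu\int_X u\,d\mu$, the general case follows by squeezing. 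Finally, functions in $\mathcal E$ may take the value $-\infty$: those with $v(z)=-\infty$ do not affect $\hat u(z)$ and the rest may be truncated, so in the separation one may assume $\mathcal E\subseteq C(X,\R)$.
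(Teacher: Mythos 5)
The paper itself offers no proof of this statement: it is quoted as the Choquet--Edwards theorem, with \cite{Edwards} and \cite{Gamelin} cited for the argument, so there is nothing internal to compare against. Your proof is essentially Edwards' separation argument, transplanted into $C(X)\times\R$. The easy inequality, the interior point of $B$ (via lower boundedness of the lsc function $u$ on the compact $X$), the sign analysis forcing $\nu\le 0$ and $s\ge 0$, the normalization $s=\tilde\nu(X)>0$, and the identity $\int_X u\,d\tilde\nu=\sup\{\int_X f\,d\tilde\nu:\ f\in C(X),\ f\le u\}$ are all correct. Two remarks on the periphery: your non-emptiness sketch needs an extra step (a finite subcover of the weak-$*$ compact set of probability measures by the open sets $\{\mu:\ g(z)-\int g\,d\mu>1\}$, followed by a finite-dimensional separation or a minimax theorem, to manufacture the single $g$); and in fact non-emptiness is not logically needed for the stated equality, since your separation already shows that $\hat u(z)<c$ produces a Jensen measure with $\int u\,d\mu\le c$.

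The genuine gap is in your last sentence. The cone $\mathcal E$ consists of continuous functions with values in $[-\infty,\infty)$, and this is not decorative: in the motivating example $\mathcal E\ni\log|f|$ the $-\infty$ values are exactly what distinguishes Jensen measures from representing measures. An element $v\in\mathcal E$ with $v(z)>-\infty$ but $v=-\infty$ on part of $X$ cannot be placed in your set $A\subset C(X)\times\R$, cannot be truncated \emph{within} $\mathcal E$ (the cone is not assumed stable under $\max$ with constants), and cannot be discarded: discarding it enlarges the class of Jensen measures, so the measure $\mu$ your separation produces is only Jensen for the real-valued subcone and need not lie in the class over which the right-hand infimum is taken --- the final contradiction then evaporates. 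Two repairs are available. Edwards' own device is to separate using the convex set $\{f\in C(X):\ f\ge v\ \text{on}\ X\ \text{for some}\ v\in\mathcal E\ \text{with}\ v(z)\ge c\}$; the positivity of the separating measure $\tilde\nu$ then gives $\int_X v\,d\tilde\nu=\inf\{\int_X f\,d\tilde\nu:\ f\in C(X),\ f\ge v\}\ge c\,\tilde\nu(X)$ for every such usc $v$, so the normalized measure is Jensen for all of $\mathcal E$. Alternatively, your truncation idea can be saved, but only by adjoining to $\mathcal E$ the convex cone generated by all truncations $\max(v,-n)$, checking that this does not change $\hat u$ (replace $\max(v_i,-n_i)$ by $v_i$ or by the constant $-n_i$ according to the sign of $v_i(z)+n_i$), and checking by monotone convergence that a measure satisfying the Jensen inequality for all truncations satisfies it for $v$ itself; none of this is in your write-up. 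For the cones actually used in this paper ($\mathcal P_\Gamma$, consisting of smooth real-valued functions) the issue does not arise and your proof is complete as written.
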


 \subsection{Maximum principle}

 We now  turn to the description of the hull $\widehat K_\Gamma$ in terms of  $\Gamma$-directed  currents.  The  following maximum principle
 which is  a  refinement of  a result in \cite{BerndtssonSibony} will be  useful.
 \begin{theorem}
  Let $U$ be a relatively compact  open set in $\C^k.$ Let $T\in\mathcal P^0_\Gamma.$
 \\
 i) Assume $i\ddbar T\leq 0,$  in $U.$ Then for $z\in \supp (T) \cap U$  we have  for any
 $v\in \overline{\mathcal P}_\Gamma,$
 $$
 v(z)\leq  \sup_{\partial U\cap \supp(T)} v.
 $$
 ii) Assume $i\ddbar T=\nu^+-\nu^-,$  with  $\nu^\pm$ positive measures.  Suppose $u$ is a bounded  $T$-harmonic function.
 Let $v\in \overline{\mathcal P}_\Gamma.$  Assume $v\leq u$ on  $(\partial U\cap \supp(T))\cup (U\cap\supp(\nu^+)). $
 Then $v\leq u$ on $ U\cap \supp(T).$ 
 \end{theorem}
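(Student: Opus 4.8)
\noindent\emph{Proof strategy.} I would deduce both statements from a single Green‑type integration by parts against the bidimension‑$(1,1)$ current $T$, performed on a sublevel set and applied to a squared non‑negative test function; part (i) is the special case ``comparison with a constant'', which I describe first. Since every element of $\overline{\mathcal P}_\Gamma$ is a decreasing limit of smooth functions of $\mathcal P_\Gamma$, and on a compact set the supremum commutes with such limits, it is enough to treat $v\in\mathcal P_\Gamma$ smooth near $\overline U$. Arguing by contradiction, assume $v(z_0)>M_0:=\sup_{\partial U\cap\supp(T)}v$ for some $z_0\in\supp(T)\cap U$. Replacing $v$ by $v+\varepsilon|z|^2$ for small $\varepsilon>0$, I may assume moreover that $v$ is \emph{strictly} $T$‑subharmonic, i.e. $i\ddbar v\wedge T\ge c\,\sigma_T$ near $\overline U$ for some $c>0$, where $\sigma_T$ is the trace measure: indeed $i\ddbar|z|^2\wedge T$ is a positive multiple of $\sigma_T$, while the boundary supremum moves by at most $\varepsilon\sup_{\overline U}|z|^2$, so $v(z_0)>\sup_{\partial U\cap\supp(T)}v$ still holds after the perturbation.

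\noindent\emph{The sublevel set and the test function.} Choose $M$ with $\sup_{\partial U\cap\supp(T)}v<M<v(z_0)$ and $\sigma_T(\{v=M\})=0$ (possible, because the level sets are disjoint and $\sigma_T$ is locally finite), and set $\Omega:=\{v>M\}\cap U$, so $z_0\in\Omega$. Since $v<M$ on the compact set $\partial U\cap\supp(T)$, the closed set $\{v\ge M\}\cap\supp(T)$ stays away from $\partial U$; hence $\supp(T)\cap\overline\Omega\Subset U$, $\ \supp(T)\cap\partial\Omega\subset\{v=M\}$, and $\sigma_T(\partial\Omega)=0$. Now put
$$\phi:=\max(v-M,0)^2 .$$
As $t\mapsto\max(t,0)^2$ is convex and increasing, $\phi\in\overline{\mathcal P}_\Gamma$, so $i\ddbar\phi\wedge T\ge0$ (through smooth convex‑increasing approximants of $\phi$); furthermore $\phi$ is $C^{1,1}$, equals the smooth function $(v-M)^2$ on $\Omega$, and $\phi$ together with $d\phi$ vanishes on $\{v=M\}\supset\supp(T)\cap\partial\Omega$. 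The squaring is precisely what produces this \emph{second‑order} vanishing on the part of $\partial\Omega$ carrying $T$.

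\noindent\emph{Green's formula and the contradiction.} Take cutoffs $\chi_j$ compactly supported in $\Omega$ with $\chi_j\uparrow\mathbf{1}_\Omega$ and with $i\ddbar\chi_j$ supported in shells around $\partial\Omega$ of width tending to $0$ and of size $O((\mathrm{dist})^{-2})$. From $\langle i\ddbar T,\chi_j\phi\rangle=\langle T,i\ddbar(\chi_j\phi)\rangle$ and the Leibniz expansion $i\ddbar(\chi_j\phi)=\chi_j\,i\ddbar\phi+\phi\,i\ddbar\chi_j+i\partial\chi_j\wedge\dbar\phi+i\partial\phi\wedge\dbar\chi_j$, the three terms carrying derivatives of $\chi_j$ are $O\bigl(\sigma_T(\text{shell near }\partial\Omega\cap\supp T)\bigr)\to0$: on such a shell $\phi$ is $O((\mathrm{dist})^2)$ and $d\phi$ is $O(\mathrm{dist})$, and $\sigma_T(\partial\Omega)=0$. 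Thus $\int_\Omega\phi\,i\ddbar T=\int_\Omega i\ddbar\phi\wedge T$. The left side is $\le0$, since $i\ddbar T=-\nu$ with $\nu\ge0$ on $U$ and $\phi\ge0$; the right side is $>0$, since on $\Omega$
$$i\ddbar\phi\wedge T=2(v-M)\,i\ddbar v\wedge T+2\,i\partial v\wedge\dbar v\wedge T\ \ge\ 2c\,(v-M)\,\sigma_T ,$$
with $v-M>0$ near $z_0$ and $\sigma_T\bigl(B(z_0,\delta)\bigr)>0$ for small $\delta$ because $z_0\in\supp(T)$. This contradiction proves (i).

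\noindent\emph{Part (ii) and the main obstacle.} For (ii) I would repeat the scheme with the smooth approximants $u_n$ of $u$ ($|u_n|\le C$, $u_n\to u$ monotonically, $\beta_n:=i\ddbar u_n\wedge T\ge0$, $\beta_n\to0$) in the role of the constant $M$, and with $U\cap\supp(\nu^+)$ adjoined to the boundary. After the analogous reductions, assuming $v(z_0)>u(z_0)$ I fix $n$ large and a small $s>0$ with $\sigma_T(\{v=u_n+s\})=0$ and with $s$ exceeding $\sup_{(\partial U\cap\supp T)\cup(U\cap\supp\nu^+)}\max(u-u_n,0)\to0$, and set $\Omega:=\{v>u_n+s\}\cap U\ni z_0$ and $\phi:=\max(v-u_n-s,0)^2$. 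Because $v\le u$ on $(\partial U\cap\supp T)\cup(U\cap\supp\nu^+)$, this $\Omega$ misses that set, so $\supp(T)\cap\overline\Omega\Subset U$, $\ \nu^+(\Omega)=0$, $\ \supp(T)\cap\partial\Omega\subset\{v=u_n+s\}$ has $\sigma_T$‑measure $0$, and $\phi$ vanishes to second order there. The same Green's formula gives $\int_\Omega\phi\,i\ddbar T=\int_\Omega i\ddbar\phi\wedge T$; its left side is $-\int_\Omega\phi\,\nu^-\le0$ since $\nu^+(\Omega)=0$, and on $\Omega$ its right‑side integrand is $2(v-u_n-s)(i\ddbar v\wedge T-\beta_n)+2\,i\partial(v-u_n)\wedge\dbar(v-u_n)\wedge T\ge 2c(v-u_n-s)\,\sigma_T-2\|v-u_n-s\|_\infty\,\beta_n$, whence $\int_\Omega i\ddbar\phi\wedge T\ge 2c\int_{B(z_0,\delta)}(v-u_n-s)\,\sigma_T-C'\beta_n(\overline U)>0$ for $n$ large — the desired contradiction. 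The genuine difficulty throughout is the vanishing of the boundary terms in the integration by parts: a first‑order vanishing test function such as $\max(v-M,0)$ would leave $\phi\,i\ddbar\chi_j\wedge T$ of size $O(1)$, which is exactly why one squares; note that $\max(v-u_n-s,0)^2$ in (ii) need not belong to $\overline{\mathcal P}_\Gamma$, but this is harmless since on $\Omega$ its complex Hessian paired with $T$ is the explicit expression above, positive up to the vanishing error $\beta_n$.
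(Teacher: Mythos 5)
Your strategy is essentially the paper's: argue by contradiction, add $\varepsilon|z|^2$ to make $v$ strictly $\Gamma$-subharmonic against $T$, and integrate by parts the positive part of the perturbed difference against $T$, playing the sign of $i\ddbar T$ off against the strict positivity of $i\ddbar v\wedge T$ on the set where the difference is positive. The one genuine variation is that you square the positive part: the paper pairs $T$ directly with $(v_1)^+$ (resp.\ $(v-u_n)^+$) and disposes of the cutoff by remarking that it only varies where $(v_1)^+\equiv 0$ on $\supp(T)$, whereas your $\phi=\max(\cdot,0)^2$ is $C^{1,1}$, vanishes to second order on $\supp(T)\cap\partial\Omega$, and makes the vanishing of the shell terms an explicit estimate against $\sigma_T(\partial\Omega)=0$. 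That is a leguinely cleaner way to justify the same Green's formula, and part (i) as you wrote it is correct.

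In part (ii) there is one step that does not hold as stated: you choose $s$ larger than $\sup_{F}\max(u-u_n,0)$, $F=(\partial U\cap\supp T)\cup(U\cap\supp\nu^+)$, and assert this supremum tends to $0$. Since $u$ is only an increasing limit of the continuous $u_n$, it is merely lower semicontinuous, and a decreasing sequence of lsc functions tending pointwise to $0$ on a compact set need not do so uniformly (Dini goes the wrong way here); so $\sup_F(u-u_n)$ may stay bounded away from $0$. The conclusion you actually need --- that for all large $n$ one has $v<u_n+s$ on $F$ --- is still true, but must be obtained as in the paper: first reduce to the strict inequality $v\le u-c$ on $F$ (replace $v$ by $v-c$ with $0<c<v(z_0)-u(z_0)$), then observe that the compact sets $\{v\ge u_n\}\cap F$ decrease with intersection $\{v\ge u\}\cap F=\varnothing$, hence are eventually empty. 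With that repair your argument for (ii) runs exactly parallel to the paper's; note also that your final lower bound $2c\int_{B(z_0,\delta)}(v-u_n-s)\,d\sigma_T$ requires you to fix $n$ \emph{after} fixing a ball on which $v-u_n-s$ is bounded below, and since $u_n$ increases this bound degrades with $n$ --- you should make explicit which quantities are frozen before letting $\beta_n(\overline U)\to 0$, as the whole contradiction rests on the interior term dominating $C'\beta_n(\overline U)$ for a single sufficiently large $n$.
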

 \begin{proof}
i)  We can  assume that $v$ is  smooth. Suppose $v<0$ on $\partial U\cap \supp(T)$ and that $v(p)>0,$ where $p$ is
  a point in $ U\cap \supp(T).$  The  same inequalities hold if we  replace $v$ by $v_1(z):=v(z)+\epsilon |z-p|^2,$   with $\epsilon >0$
  small  enough. We can assume that $v_1<0,$ in a neighborhood $U_1$ of $\partial U\cap \supp(T).$
  So  the  intersection  of the support of $ v^+_1$  and of $ U\cap \supp(T)$   is compact. Hence
    we get by integration by parts that:
  $$
  0< \int i\ddbar v^+_1\wedge T=\int v^+_1 i\ddbar T\leq 0. 
  $$
 A contradiction. 
 
 We make the argument more explicit. Let $\rho$ be a cutoff function equal to one in a neighborhood of the support of  $ v^+_1$ in $\supp(T)$ and which varies on $\supp(T)$  only on $U_1.$  If we multiply $ v^+_1$ with  $\rho, $ we get a genuine function with compact support.  By integration by part  we obtain the above relation.
  Observe that the function $\rho$ does not appear. Hence the maximum principle holds for ${\mathcal P}_\Gamma$ with respect to $T.$
  
 Since the functions of $ \overline{\mathcal P}_\Gamma,$ are decreasing limits of functions in 
 ${\mathcal P}_\Gamma$  we get also the maximum principle for functions in $ \overline{\mathcal P}_\Gamma.$

 ii) As   above  assume $v-u<0$ on   $(\partial U\cap \supp(T))\cup (U\cap\supp(\nu^+)). $
 Suppose that  for $p\in U\cap \supp(T),$  $(v-u)(p)>0.$ Observe that $  \supp(\nu^+)$ is contained in $\supp(T).$
 
 Let $|u_n|\leq c,$  be a sequence of smooth functions, such that $0\leq  i\ddbar u_n\wedge T\to 0$  and $u=\lim\uparrow  u_n.$ The function $(v-u)$ is usc, hence we can assume that $v$ is smooth.
 So for $n$  large  enough,  we still have 
 $v-u_n<0$ near  $(\partial U\cap \supp(T))\cup (U\cap\supp(\nu^+)).$  Replacing $v,$ by $ v+\epsilon |z-p|^2,$  with $\epsilon>0$ small enough,   we can also   assume that:  $\lim_n i\ddbar (v-u_n)\wedge T\geq \epsilon i\ddbar |z|^2\wedge T.$ Hence
 $$
 0<\lim_n \int i\ddbar(v-u_n)^+\wedge T=\lim_n \int(v-u_n)^+(\nu^+-\nu^-)\leq \lim_n \int (v-u_n)^+\nu^+=0.
 $$
 A contradiction. The case where $u=\lim\downarrow u_n$ is even simpler.  So the maximum principle holds.
  \end{proof}
 
 For the polynomially convex case of the following Corollary see \cite{BerndtssonSibony}and \cite{DL} Proposition 2.5. 
  
  It is proved in \cite{FS1} Corollary  2.6, that the complement of the support
  of a positive current of bidimension (p,p) such that  $i\ddbar T\leq 0$ is locally p- pseudoconvex.
  If $p=1,$ this is just the usual pseudoconvexity. Other notions of pseudoconvexity are used in \cite{DL} to prove similar results.
 
 \begin{corollary}
 Let $K$ be a  compact set in $\C^k.$ Let $T\geq 0$ be a  $\Gamma$-directed current of bidimension $(1,1)$ with compact support..  Assume
 $i\ddbar T\leq 0$ on $\C^k\setminus \ K.$  Then  $\supp(T)\subset  \widehat{K}_\Gamma.$
 \end{corollary}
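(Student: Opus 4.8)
The plan is to deduce this straight from part i) of the maximum principle above. Let $z_0\in\supp(T).$ If $z_0\in K$ there is nothing to prove, since $K\subset\widehat K_\Gamma$ (for $z\in K$ and $u\in\mathcal P_\Gamma$ one trivially has $u(z)\le\sup_K u$). So assume $z_0\in\supp(T)\setminus K$ and fix an arbitrary $v\in\mathcal P_\Gamma\subset\overline{\mathcal P}_\Gamma;$ it suffices to show $v(z_0)\le\sup_K v.$

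The one point requiring a little care is the choice of the domain $U$ to which the maximum principle is applied. Since $T$ has compact support, I would fix $R>0$ so large that $\supp(T)\cup K$ is contained in the open ball $B:=B(0,R),$ and take
$$
U:=B\setminus K.
$$
Then $U$ is open and relatively compact, $T\in\mathcal P^0_\Gamma,$ and $U\subset\C^k\setminus K,$ so by hypothesis $i\ddbar T\le 0$ in $U.$ Moreover $z_0\in\supp(T)\setminus K\subset B\setminus K=U,$ hence $z_0\in\supp(T)\cap U.$ Finally $\partial U\subset\partial B\cup K,$ while $\supp(T)$ lies in the open ball $B$ and therefore misses $\partial B;$ consequently
$$
\partial U\cap\supp(T)\ \subset\ K\cap\supp(T)\ \subset\ K.
$$

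With these choices, part i) of the maximum principle applied to $T,$ $U,$ the point $z_0\in\supp(T)\cap U,$ and $v\in\overline{\mathcal P}_\Gamma$ gives
$$
v(z_0)\ \le\ \sup_{\partial U\cap\supp(T)}v\ \le\ \sup_K v,
$$
the last inequality because $\partial U\cap\supp(T)\subset K$ (both sets are compact and $v$ is usc, so the suprema are attained). As $v\in\mathcal P_\Gamma$ is arbitrary, this shows $z_0\in\widehat K_\Gamma,$ and the corollary follows.

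I do not expect a genuine obstacle here: once the maximum principle is available the argument is essentially bookkeeping. The only thing worth a remark is the degenerate possibility $\partial U\cap\supp(T)=\varnothing;$ by the above description of $\partial U$ this would force $\supp(T)\cap\partial K=\varnothing,$ so that $\supp(T)\setminus K$ is a nonempty compact set at positive distance from $K.$ Restricting $T$ to a small neighborhood of it then produces a nonzero positive $\Gamma$-directed current $T_1$ with compact support in $\C^k$ and $i\ddbar T_1\le 0$ everywhere; pairing against $i\ddbar|z|^2$ (Stokes) shows $T_1$ has zero mass, a contradiction. Hence $\partial U\cap\supp(T)$ is in fact nonempty whenever $z_0\in\supp(T)\setminus K,$ and the suprema above are over nonempty compact sets.
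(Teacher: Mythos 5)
Your proof is correct and follows essentially the same route as the paper: apply part i) of the maximum principle on a large ball with $K$ removed, noting that $\supp(T)$ misses the sphere so that $\partial U\cap\supp(T)\subset K.$ The only (inessential) difference is that the paper takes $U=B\setminus K_r$ with the $r$-neighborhood $K_r$ shrinking to $K,$ whereas you work directly with $U=B\setminus K;$ your extra remark ruling out $\partial U\cap\supp(T)=\varnothing$ is a reasonable piece of added care that the paper leaves implicit.
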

 \begin{proof}
 Let $B$ be  a large ball containing $\supp(T).$  Let $K_r$ be the$ r- $neighborhood of $K.$
 We can chose for $U$ the set $B\setminus \ K_r$, with $r$ arbitrarily small.
 If $x\in \supp(T)\setminus K$ and   $u\in \overline{\mathcal P}_\Gamma,$  then it follows from the  above maximum principle that $u(x)\leq \sup_K u.$ So $x\in  \widehat{K}_\Gamma.$
  \end{proof}
 
 We now  extend a  result  from   \cite{DuvalSibony} to hulls with respect to $\mathcal P_\Gamma.$
 
\begin{theorem}\label{T:Jensen}
Let $K$ be a compact set in $\C^k,$ contained in a ball $B.$  Assume  $x\in  \widehat{K}_\Gamma.$ Let $\mu_x$  be a  Jensen measure on $K$  representing $x$  for  $\mathcal P_\Gamma.$
 There is a  positive $\Gamma$-directed current $T_x,$ with support in  $ \overline{B},$ such that
 $$
 i\ddbar T_x=\mu_x-\delta_x.
 $$
 In  particular, for every $\varphi\in \mathcal P_\Gamma,$ we have the Jensen's formula:
 $$
 \int \varphi d\mu_x =  \varphi(x)+ \langle T_x,i\ddbar\varphi\rangle.
 $$
 \end{theorem}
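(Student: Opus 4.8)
The plan is to produce $T_x$ by a Hahn--Banach separation argument, following the proof of the analogous statement for $\mathcal P_0$ in \cite{DuvalSibony}; the two inputs that go beyond soft functional analysis are the identification of the polar of the cone $\mathcal P_\Gamma^0$ of $\Gamma$-directed currents with the cone $\mathcal P_\Gamma$, and an a priori mass bound coming from a strictly plurisubharmonic function. Since $K$ is compact in the open ball $B$, both $K$ and the point $x\in\widehat K_\Gamma\subset\widehat K$ lie in a closed ball strictly inside $B$ (use that $|z|^2$ is psh). Let $V$ be the space of smooth real functions on $\overline B$, with its Fr\'echet topology, and endow $V'$ with the topology $\sigma(V',V)$, whose dual is $V$. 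Every current $S$ of bidimension $(1,1)$ with $\supp S\subset\overline B$ determines $j(S):=i\ddbar S\in V'$, characterized by $\langle j(S),u\rangle=\langle S,i\ddbar u\rangle$, and $j$ is weak-$*$ continuous. Writing $L:=\mu_x-\delta_x$ and $\mathcal C:=\{S\in\mathcal P_\Gamma^0:\ \supp S\subset\overline B\}$ --- a weak-$*$ closed convex cone --- the goal is to show $L\in j(\mathcal C)$.

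For the separation step, observe that for $u\in V$ the inequality $\langle S,i\ddbar u\rangle\ge0$ holds for every $S\in\mathcal C$ if and only if $u$ lies in $\mathcal P_\Gamma$ on $\overline B$: the ``if'' direction follows from the definition of $\mathcal P_\Gamma^0$ together with the fact that the currents in $\mathcal C$ are supported in $\overline B$, and for the converse one tests against the Dirac currents $\delta_z\otimes(i\xi_z\wedge\bar\xi_z)$ with $z\in\overline B$ and $\xi_z\in\Gamma_z$, which belong to $\mathcal C$ and for which the condition $\langle S,i\ddbar u\rangle\ge0$ is precisely the defining inequality of $\mathcal P_\Gamma$ at $z$ in the direction $\xi_z$. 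By the bipolar theorem the weak-$*$ closure of the convex cone $j(\mathcal C)$ is therefore $\{M\in V':\ \langle M,u\rangle\ge0\ \text{for every}\ u\in V\ \text{that lies in}\ \mathcal P_\Gamma\ \text{on}\ \overline B\}$. Given such a $u$, one can modify it outside a compact neighbourhood of $K\cup\{x\}$ in $B$, by taking a regularized maximum with a sufficiently large multiple of $|z|^2$, so that it becomes a function of $\overline{\mathcal P}_\Gamma$ on all of $\C^k$ still agreeing with $u$ on $K\cup\{x\}$; since the Jensen inequality for $\mu_x$ passes from $\mathcal P_\Gamma$ to $\overline{\mathcal P}_\Gamma$ by monotone convergence, this yields $\langle L,u\rangle=\int u\,d\mu_x-u(x)\ge0$. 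Hence $L$ lies in the weak-$*$ closure of $j(\mathcal C)$.

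To pass from this closure to $j(\mathcal C)$ itself I would use a mass bound. Put $\rho(z):=|z|^2$, so $\rho\in\mathcal P_0\subset\mathcal P_\Gamma$ and $\beta:=i\ddbar\rho$ is a strictly positive $(1,1)$-form; for any positive $(1,1)$-current $S$ supported in $\overline B$ one has $\langle S,\beta\rangle=\langle j(S),\rho\rangle$ directly from the definition of $j$. Choosing a net $S_\alpha\in\mathcal C$ with $j(S_\alpha)\to L$, the right-hand side converges to $\langle L,\rho\rangle=\int|z|^2\,d\mu_x-|x|^2$, so the masses $\langle S_\alpha,\beta\rangle$ are eventually bounded by a constant $M_0$. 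The set $\mathcal K:=\{S\in\mathcal C:\ \langle S,\beta\rangle\le M_0+1\}$ is weak-$*$ compact, so a subnet of $(S_\alpha)$ converges to some $T_x\in\mathcal K$, and then $i\ddbar T_x=j(T_x)=L=\mu_x-\delta_x$ by continuity of $j$. Thus $T_x\ge0$ is $\Gamma$-directed with $\supp T_x\subset\overline B$, and for every $\varphi\in\mathcal P_\Gamma$ we get $\langle T_x,i\ddbar\varphi\rangle=\langle i\ddbar T_x,\varphi\rangle=\int\varphi\,d\mu_x-\varphi(x)$, which is Jensen's formula.

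I expect the main obstacle to be the separation step: one must verify that the polar in $V$ of the cone of $\Gamma$-directed currents is precisely $\mathcal P_\Gamma$ on $\overline B$ --- which is where the extremal Dirac currents of $\mathcal P_\Gamma^0$ enter --- and one must make the extension manoeuvre precise enough to legitimately apply the defining Jensen inequality for $\mu_x$, which is stated for the global cone $\mathcal P_\Gamma$. The other genuinely analytic point is the mass estimate: without the strict positivity of $\beta=i\ddbar\rho$ one would obtain $\mu_x-\delta_x$ only in the weak-$*$ closure of $j(\mathcal C)$ and could not conclude that it equals $i\ddbar T_x$ for an actual $\Gamma$-directed current; it is precisely this estimate that allows the unbounded cone $\mathcal C$ to be replaced by the weak-$*$ compact set $\mathcal K$.
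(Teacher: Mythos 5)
Your proposal is correct and follows essentially the same route as the paper: a Hahn--Banach/bipolar separation in which the polar of the cone of $\Gamma$-directed currents supported in $\overline B$ is identified with $\mathcal P_\Gamma(\overline B)$ via the Dirac currents $\delta_z\otimes(i\xi_z\wedge\bar\xi_z)$, the Jensen inequality is transferred from $\mathcal P_\Gamma$ to $\mathcal P_\Gamma(\overline B)$ by patching with a large psh function, and the mass bound $\langle T,i\ddbar|z|^2\rangle=\langle i\ddbar T,|z|^2\rangle$ supplies the compactness needed to realize $\mu_x-\delta_x$ as $i\ddbar T_x$ for an actual current. The only cosmetic difference is that you characterize the weak-$*$ closure of the image cone and then upgrade via the mass estimate, whereas the paper uses the same estimate to argue directly that the cone is closed before separating.
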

\begin{proof}
   Consider the cone $\mathcal P_\Gamma( \overline{B})$ of
smooth functions on $ \overline{B},$ such that, for all $z$ in $ \overline{B},$
 and  $\xi_z\in\Gamma_z$ 
 $$\langle i\ddbar u(z), i\xi_z\wedge \bar\xi_z  \rangle\geq 0.$$ 
 It is easy to check, that $\mu_x$ is a Jensen measure for 
 $\mathcal P_\Gamma( \overline{B}).$ We can assume that $u$ extends smoothly to $\C^k$ and add  a smooth psh function $v$ vanishing on a slightly smaller ball and such that  $u+v$ is in $\mathcal P_\Gamma.$
 Define
$$
C:=\left\lbrace i\ddbar T:\ T \geq 0,\text{} \Gamma\text{-directed}, \ \supp(T)\subset \overline{B} \right\rbrace.
$$
Clearly, $C$ is  a convex set of distributions. Since  $\langle i\ddbar T,|z|^2 \rangle=\langle T, i\ddbar|z|^2 \rangle,$ it follows that any bounded set of
$C$ is  compact. Suppose  $\mu_x-\delta_x\not\in C.$ Then  by Hahn-Banach, there is  a  smooth function $u$ such that
$\int ud\mu_x -u(x)<0.$ Moreover  for every $T\in {\mathcal P}^{\text{0}}_\Gamma,$ supported on $\overline{B},$ 
$\langle i\ddbar T,u\rangle\geq 0.$ So every  $T\in {\mathcal P}^{\text{0}}_\Gamma,$  with $\supp(T)\subset \overline{B}$ satisfies
$\langle  T, i\ddbar u\rangle\geq 0.$

The last inequality says that $u\in\mathcal P_\Gamma( \overline{B}).$ A contradiction, with the fact that $\mu_x$ is a Jensen measure representing $x$ for $\mathcal P_\Gamma( \overline{B}).$

 The Jensen's formula is then clear.   
\end{proof}

\begin{corollary}  We have:  $\widehat {K}_\Gamma=\bigcup \supp(T) )\cup K,$
the first union being taken over all $T\geq 0,$ with compact support, $\Gamma$-directed, such that $i\ddbar T\leq 0,$  in $\C^k\setminus K.$ 
\end{corollary}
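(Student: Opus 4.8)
The plan is to prove the two inclusions separately, with the nontrivial one following immediately from the Corollary just proved (the one asserting $\supp(T)\subset\widehat K_\Gamma$ whenever $T\geq 0$ is $\Gamma$-directed with compact support and $i\ddbar T\leq 0$ off $K$) together with the representation Theorem~\ref{T:Jensen}.

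First I would dispose of the easy direction. For any admissible $T$ — that is, $T\geq 0$, $\Gamma$-directed, compactly supported, with $i\ddbar T\leq 0$ on $\C^k\setminus K$ — the preceding Corollary gives $\supp(T)\subset\widehat K_\Gamma$. Trivially $K\subset\widehat K_\Gamma$ as well. Hence the union $\bigcup\supp(T)\cup K$ is contained in $\widehat K_\Gamma$. No work is needed here beyond citing the Corollary.

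For the reverse inclusion, let $x\in\widehat K_\Gamma$; I must produce an admissible current $T$ with $x\in\supp(T)$. If $x\in K$ there is nothing to do, so assume $x\notin K$. Since $x\in\widehat K_\Gamma$, the Choquet-type duality for the cone $\mathcal P_\Gamma$ (the argument already used in the proof of Theorem~\ref{T:Jensen}: the hull condition $u(x)\leq\sup_K u$ for all $u\in\mathcal P_\Gamma$ forces, by Hahn–Banach, the existence of a probability measure) yields a Jensen measure $\mu_x$ on $K$ representing $x$ with respect to $\mathcal P_\Gamma$. Apply Theorem~\ref{T:Jensen}: there is a positive $\Gamma$-directed current $T_x$, supported in $\overline B$ (any ball containing $K$), with $i\ddbar T_x=\mu_x-\delta_x$. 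Since $\mu_x$ is carried by $K$ and $x\notin K$, on $\C^k\setminus K$ we have $i\ddbar T_x=-\delta_x\leq 0$ there — more precisely $i\ddbar T_x\leq 0$ on any open set avoiding $K$; in a neighborhood of $x$ it equals $-\delta_x$, which is $\leq 0$, and elsewhere off $K$ it vanishes. Thus $T_x$ is admissible for the union on the right-hand side.

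It remains to check that $x\in\supp(T_x)$, which is the one point requiring a small argument: if $x$ were not in $\supp(T_x)$, then in a neighborhood $V$ of $x$ with $\overline V\cap\supp(T_x)=\varnothing$ we would have $i\ddbar T_x=0$ on $V$ in the sense of currents, contradicting $i\ddbar T_x=-\delta_x$ on $V$ (the Dirac mass $\delta_x$ is nonzero). So $x\in\supp(T_x)\subset\bigcup\supp(T)$. This establishes $\widehat K_\Gamma\subset\bigcup\supp(T)\cup K$, and combined with the first paragraph gives equality. \textbf{The main (and only) subtlety} is the derivation of the Jensen measure $\mu_x$ from membership in $\widehat K_\Gamma$ and the verification that the resulting $T_x$ satisfies $i\ddbar T_x\leq 0$ off $K$ — both of which are essentially contained in the proof of Theorem~\ref{T:Jensen} and the definition of the hull, so the corollary is really a repackaging.
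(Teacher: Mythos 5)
Your proof is correct and follows essentially the same route as the paper: one inclusion from the preceding corollary (maximum principle), the other from Theorem~\ref{T:Jensen} applied to a Jensen measure for $x$ supported on $K$. The only thing you add is the explicit (and correct) check that $x\in\supp(T_x)$ via $i\ddbar T_x=-\delta_x$ near $x$, which the paper leaves implicit.
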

\begin{proof}
We have seen that if $i\ddbar T\leq 0,$ on $\C^k \setminus K,$ then $\supp(T)\subset \widehat {K}_\Gamma.$ 
The previous theorem shows that for every $x\in  \widehat {K}_\Gamma\setminus K,$ there is  such a $T$ containing $x$ in its support. 
\end{proof}
\begin{corollary}{\rm (Local maximum principle for   $\widehat {K}_\Gamma$.)}
Let $U\Subset \C^k$ be  an open set. Then  for every $v\in\overline{\mathcal P}_\Gamma$ and $x\in U\cap  \widehat {K}_\Gamma\ $ we have:
$$
v(x)\leq \sup\limits_{(\partial U\cap  \widehat {K}_\Gamma)\cup (U\cap K)} v.
$$
\end{corollary}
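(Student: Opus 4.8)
The plan is to deduce the statement from part ii) of the maximum principle above, applied to the current supplied by Theorem~\ref{T:Jensen}. Set $c:=\sup\{\,v(\zeta):\ \zeta\in(\partial U\cap\widehat K_\Gamma)\cup(U\cap K)\,\}$. If $c=+\infty$ there is nothing to prove, so assume $c<\infty$; and if $x\in K$ then $x\in U\cap K$ gives $v(x)\le c$ at once, so we may assume $x\in\widehat K_\Gamma\setminus K$.

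By the Choquet--Edwards duality recalled above, there is a Jensen measure $\mu_x$ on $K$ representing $x$ with respect to $\mathcal P_\Gamma$, and Theorem~\ref{T:Jensen} then produces a positive $\Gamma$-directed current $T_x$ of compact support with $i\ddbar T_x=\mu_x-\delta_x$. Two remarks: first, $x\in\supp(T_x)$, for otherwise $T_x$ would vanish near $x$ and $i\ddbar T_x$ would be zero there, whereas $i\ddbar T_x=-\delta_x$ in a small ball about $x$ (recall $x\notin K=\supp(\mu_x)$). Second, since $\mu_x$ is carried by $K$ and $x\notin K$, we have $i\ddbar T_x\le 0$ on $\C^k\setminus K$, so by the Corollary following the maximum principle, $\supp(T_x)\subset\widehat K_\Gamma$.

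Now I apply part ii) of the maximum principle with the open set $U$, the current $T_x\in\mathcal P_\Gamma^0$ together with the decomposition $\nu^+=\mu_x$, $\nu^-=\delta_x$, the bounded $T_x$-harmonic function $u\equiv c$ (take the constant sequence $u_n\equiv c$, for which $i\ddbar u_n\wedge T_x=0$), and the given $v\in\overline{\mathcal P}_\Gamma$. It remains to check $v\le u=c$ on $(\partial U\cap\supp(T_x))\cup(U\cap\supp(\nu^+))$: on $U\cap\supp(\mu_x)\subset U\cap K$ this holds by the definition of $c$, and on $\partial U\cap\supp(T_x)\subset\partial U\cap\widehat K_\Gamma$ (by the second remark) it holds again by the definition of $c$. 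Part ii) then yields $v\le c$ on $U\cap\supp(T_x)$, and since $x\in U\cap\supp(T_x)$ we conclude $v(x)\le c$, which is the asserted inequality.

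The proof is little more than bookkeeping once Theorem~\ref{T:Jensen}, the Corollary on $\supp(T)\subset\widehat K_\Gamma$, and the maximum principle are available; the one substantive choice is to invoke part ii) with a constant barrier rather than part i), which is necessary because $i\ddbar T_x$ carries the positive mass $\mu_x$ inside $U$ whenever $U\cap K\ne\varnothing$, so that $i\ddbar T_x\le 0$ fails on $U$ in general. A minor technical point is verifying that the constant function indeed qualifies as a bounded $T_x$-harmonic function in the sense defined, which is immediate from the constant approximating sequence.
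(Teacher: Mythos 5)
Your proof is correct and follows the same route as the paper: take a Jensen measure $\mu_x$ on $K$ representing $x$, produce a positive $\Gamma$-directed $T_x$ with $i\ddbar T_x=\mu_x-\delta_x$ via Theorem~\ref{T:Jensen}, and apply the maximum principle for $T_x$ and $\mathcal P_\Gamma$. The paper leaves that last step as a one-line remark; your identification of part~ii) with the constant barrier $u\equiv c$ (rather than part~i), together with the verifications that $x\in\supp(T_x)$ and $\supp(T_x)\subset\widehat K_\Gamma$, is exactly the right way to fill it in.
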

\begin{proof}
We can  assume that $x\in U\setminus K.$  Let $\mu_x$ be a Jensen measure for $x,$ supported on $K.$
Let $T_x$ be a  positive $\Gamma$-directed current such that $i\ddbar T_x=\mu_x-\delta_x.$
We then  apply the maximum   principle
 for  $T_x$ and $\mathcal P_\Gamma.$
\end{proof}

Observe that when  $\overline{\mathcal P}_\Gamma=\overline{\mathcal P}_0,$ the cone  of psh functions,
we get Rossi's local maximum  principle  (see \cite{Rossi} or \cite[p.  74]{Hoermander}). Indeed, $\widehat K$ is  the  hull of $K$ with respect  to $\mathcal P_0.$ 
\begin {remark}\rm
For Rossi's local maximum principle, we can also use the  family  ${n\over m} \log |p|,$  with $p$ polynomial and $n,m$ integers, without introducing  general
psh functions and using implicitly that the two notions of hulls are the same, see  \cite{Hoermander}. 
\end{remark}
We show next that under appropriate assumptions a ball is in the $\mathcal P_\Gamma$-hull of its boundary.

\begin{theorem}{\rm (Semi-local solutions for directed systems) }
Let $N$ be  a  complex  manifold, of dimension $k.$ 
Let $\Gamma:=\bigcup_{z\in\ N}\Gamma_z$  be a closed  family of cones as 
in Subsection \ref{ss:directed_currents_hulls} such that  $\Gamma_z \not=0$, for every $z$ in $N.$
 Assume moreover that $N$ admits a smooth exhaustion function 
$\rho$  such that for all non-zero $\xi_z\in\Gamma_z,$ 
$$\langle i\ddbar \rho(z), i\xi_z\wedge \bar\xi_z  \rangle> 0.$$

Let $U$ be  a  relatively compact  open set in $N$ and let $K=\partial U.$  
 Then $ U\subset \widehat K_\Gamma.$ 
In particular  for every $x\in U,$ there is a probability measure $\mu_x$ on $\partial U$ and  a  positive, bidimension $(1,1)$ current $T_x,$ with compact support,
such that 
$$
\qquad i\ddbar T_x=\mu_x-\delta_x.
$$
\end{theorem}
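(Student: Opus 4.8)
The plan is to squeeze a maximum principle on $\overline U$ out of the hypothesis on $\rho$, read off $U\subset\widehat K_\Gamma$ at once, and then build the currents $T_x$ by a Hahn--Banach separation of the kind used for Theorem~\ref{T:Jensen}, with $\rho$ playing the role that $|z|^2$ plays in $\C^k$.

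First I would establish the following: if $v\in\mathcal C^2$ near $\overline U$ and $\langle i\ddbar v(z),i\xi_z\wedge\bar\xi_z\rangle\ge 0$ for all $z\in\overline U$ and all $\xi_z\in\Gamma_z$, then $\max_{\overline U}v=\max_{\partial U}v$. To see this, fix $\epsilon>0$ and put $v_\epsilon:=v+\epsilon\rho$, which by the hypothesis on $\rho$ is \emph{strictly} $\Gamma$-subharmonic near $\overline U$, i.e.\ $\langle i\ddbar v_\epsilon(z),i\xi_z\wedge\bar\xi_z\rangle>0$ for every nonzero $\xi_z\in\Gamma_z$. If $v_\epsilon$ attained its maximum over $\overline U$ at an interior point $z_0$, I would choose a nonzero $\xi\in\Gamma_{z_0}$ (available since $\Gamma_{z_0}\ne 0$) and note that, in a holomorphic chart around $z_0$, the slice function $t\mapsto v_\epsilon(z_0+t\xi)$ is $\mathcal C^2$ with an interior maximum at $0$, hence has nonpositive Laplacian there, i.e.\ $\langle i\ddbar v_\epsilon(z_0),i\xi\wedge\bar\xi\rangle\le 0$ --- contradicting strictness. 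Thus $\max_{\overline U}v_\epsilon=\max_{\partial U}v_\epsilon$, and from $v(x)=v_\epsilon(x)-\epsilon\rho(x)$ one gets, for $x\in U$, $v(x)\le\max_{\partial U}v+\epsilon\bigl(\max_{\partial U}\rho-\rho(x)\bigr)$; letting $\epsilon\to 0$ yields $v(x)\le\max_{\partial U}v$. Applying this to the restriction to $\overline U$ of an arbitrary $u\in\mathcal P_\Gamma$ gives $u(x)\le\sup_K u$ for all $x\in U$, that is, $U\subset\widehat K_\Gamma$.

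Then, for a fixed $x\in U$, I would introduce the convex cone
$$
C:=\bigl\{\,\mu-i\ddbar T\ :\ \mu\ge 0,\ \supp\mu\subset\partial U,\ T\ge 0\ \text{$\Gamma$-directed},\ \supp T\subset\overline U\,\bigr\}
$$
of distributions on $N$, and show it is weakly closed: pairing $\mu-i\ddbar T$ with a cutoff $\chi\equiv 1$ near $\overline U$ returns $\mass(\mu)$ (because $\langle i\ddbar T,\chi\rangle=\langle T,i\ddbar\chi\rangle=0$), while pairing with a cutoff of $\rho$ returns $\langle\mu,\rho\rangle-\langle T,i\ddbar\rho\rangle$; since $i\ddbar\rho$ dominates a fixed Hermitian form in $\Gamma$-directions over the compact set $\overline U$ and a positive $\Gamma$-directed current is a superposition of Dirac currents $\delta_a\otimes i\xi_a\wedge\bar\xi_a$ with $\xi_a\in\Gamma_a$, one gets $\langle T,i\ddbar\rho\rangle\ge c_0\,\mass(T)$ for some $c_0>0$, so along a weakly convergent sequence in $C$ the masses of the $\mu$'s and of the $T$'s stay bounded and one passes to the limit in the (closed) cones of positive measures on $\partial U$ and of positive $\Gamma$-directed currents supported on $\overline U$. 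Now if $\delta_x\notin C$, Hahn--Banach yields a real test function $u$ with $\langle S,u\rangle\ge 0$ for all $S\in C$ and $u(x)<0$; testing against $S=\delta_y$, $y\in\partial U$, forces $u\ge 0$ on $\partial U$, and testing against $S=-i\ddbar(\delta_a\otimes i\xi_a\wedge\bar\xi_a)$, $a\in\overline U$, $\xi_a\in\Gamma_a$, forces $\langle i\ddbar u(a),i\xi_a\wedge\bar\xi_a\rangle\le 0$. So $v:=-u$ is $\mathcal C^2$ near $\overline U$, satisfies the $\Gamma$-subharmonicity inequality on $\overline U$, has $v\le 0$ on $\partial U$ and yet $v(x)>0$ --- contradicting the maximum principle above. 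Hence $\delta_x\in C$, i.e.\ $i\ddbar T_x=\mu_x-\delta_x$ for some $\mu_x\ge 0$ supported on $\partial U$ and some positive $\Gamma$-directed $T_x$ with $\supp T_x\subset\overline U$; pairing this identity with $\chi\equiv 1$ near $\overline U$ gives $\mu_x(\partial U)-1=\langle T_x,i\ddbar\chi\rangle=0$, so $\mu_x$ is a probability measure and $T_x$ has compact support, as required. (From $\langle T_x,i\ddbar\varphi\rangle\ge 0$ one then sees that $\mu_x$ is automatically a Jensen measure for $x$, in line with Theorem~\ref{T:Jensen}; alternatively, once Step~1 places the Shilov boundary of $\overline U$ inside $\partial U$, one may instead produce $\mu_x$ by Choquet--Edwards and quote Theorem~\ref{T:Jensen}.)

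The step I expect to be the genuine obstacle is the weak closedness of $C$ --- concretely the estimate $\langle T,i\ddbar\rho\rangle\ge c_0\,\mass(T)$, which is the only place the strictly $\Gamma$-positive exhaustion is really used and which hinges on representing positive $\Gamma$-directed currents through $\Gamma$-directed Dirac currents. By contrast the maximum principle of the first step is short, the whole point being the perturbation by $\epsilon\rho$ together with $\Gamma_z\ne 0$.
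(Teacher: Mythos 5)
Your proposal is correct and follows essentially the same route as the paper: the inclusion $U\subset\widehat K_\Gamma$ via the perturbation $v+\epsilon\rho$ and an interior-maximum contradiction along a disc (or line) in a direction of $\Gamma_{z_0}$, and then the Hahn--Banach separation of Theorem~\ref{T:Jensen} with $\rho$ playing the role of $|z|^2$ to get mass bounds and closedness of the cone. The only cosmetic difference is that you let the boundary measure $\mu_x$ emerge from the separation itself rather than fixing a Jensen measure on $\partial U$ in advance, and you correctly isolate the one delicate ingredient (the estimate $\langle T,i\ddbar\rho\rangle\geq c_0\,\mass(T)$ via the representation of $\Gamma$-directed currents by Dirac currents), which the paper likewise uses without further proof.
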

\begin{proof}
  We consider the  cone $\mathcal P_\Gamma$
of smooth $\Gamma$-psh functions. We want  to prove  that, if $K:=\partial U,$ then $\overline U\subset \widehat K_\Gamma.$
Suppose the contrary. Then,  there is $x \in U$ and $u \in \mathcal P_\Gamma,$ such that $u<0$ on  $\partial U$ and $u(x)>0.$
For  $0<\delta\ll 1,$   the function $u_1:=u+\delta \rho$ has the same properties.  Suppose $u_1$ has a  maximum at $a\in U.$
We then   consider a  holomorphic disc $D_a$  tangent at $\xi_a\in \Gamma_a\setminus\{0\}.$ 

The  restriction  of $u_1$
to $D_a$ has a  positive  Laplacean  in a neighborhood of $a,$ and a maximum at $a.$  This is a contradiction.
Hence, $x\in  \widehat K_\Gamma.$  It suffices then to consider a Jensen measure $\mu_x$ supported on $\partial U$ and to adapt the proof of Theorem
 \ref{T:Jensen}.  Using that $\rho$ is an exhaustion function, we can construct first  a $\mathcal P_\Gamma$- convex set containing $\overline U.$  
We can compose $\rho$  with convex increasing functions to get an approximation result as in Theorem  \ref{T:Jensen}.
\end{proof}

\begin{remark}\rm   As a special case we get a similar statement for Pfaff-systems. Let $(\alpha_j)_{1\leq j\leq m}$ be  $m$  continuous $(1,0)$-forms on $N.$  Assume that at any  point $z\in N,$  the  subspace
$$
\Gamma_z:=\left\lbrace  \xi_z:\quad   \alpha_j(z)\xi_z=0,  \quad  1\leq j\leq m \right\rbrace
$$
is  of dimension at least 1. Let $U$ be  a  relatively compact  open set in $N.$
Then for every $x\in U,$ there is a probability measure $\mu_x$ on $\partial U,$ and  a  positive, bidimension $(1,1)$ current $T_x,$
such that 
$$
T_x\wedge \alpha_j=0,\ 1\leq j\leq m\qquad\text{and}\qquad i\ddbar T_x=\mu_x-\delta_x.
$$

\end{remark}

We now address the question of the local existence of $\ddbar$-closed currents. The approach is similar to the one used in  \cite{DuvalSibony} to study the polynomial hull of totally real compact sets.

\begin{theorem}{\rm  }
Let $N$ be  a  complex  manifold, with  a  smooth  positive  exhaustion function $\rho,$ as in Theorem 3.8.
Let  $\Gamma:=\bigcup_z\Gamma_z,$ be  a closed set on the tangent bundle of $N$. We assume that each fiber is a cone and $\Gamma_z \not=0$, for every $z$ in $N.$
Let $U$ be  a  relatively compact  open set in $N,$ with connected and  smooth boundary defined by  $ r<0.$  The function $r$ is smooth in a neighborhood of
$\overline{U}$ and $dr$ does not vanish on $\partial U.$ Assume also that $\overline{U}$ is
$\mathcal P_\Gamma$-convex.

  Assume that there is $z$ in $\partial U$  such that every vector in $\Gamma_z$ is transverse to $\partial U$. Then there is a positive $\Gamma$-directed current $T$ of mass one in $U$ such that $i\ddbar T=0$ in $U.$
  \end{theorem}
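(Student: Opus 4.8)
\emph{Strategy.} The plan is to reduce the statement to the already proved Theorem~\ref{T:Jensen} and Theorem~3.8 (semi-local solutions) by enlarging $U$ slightly across the transverse point $z$, so that $z$ becomes an \emph{interior} point of the enlarged domain. The point of doing so is that a Jensen measure representing an interior point of a domain, supported on its boundary, is never a Dirac mass, so the current produced by Theorem~\ref{T:Jensen} will automatically be nonzero and its $i\ddbar$ will be supported off $U$.

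Concretely, I would choose a smooth bump $\chi\ge 0$ supported near $z$ with $\chi(z)>0$, and set $r':=r-\epsilon\chi$, $U':=\{r'<0\}$, for $\epsilon>0$ small. Then $U\subseteq U'\Subset N$; since $dr\ne 0$ on $\partial U$ we still have $dr'\ne 0$ on $\partial U'$, so $\partial U'$ is smooth; since $r<0\le\epsilon\chi$ on $U$ we get $\partial U'\cap U=\varnothing$; and $r'(z)<0$, so $z$ lies in the interior of $U'$. Moreover the set of points of $\partial U$ at which some unit vector of $\Gamma$ is tangent to $\partial U$ is closed (projection of a closed subset of the unit sphere bundle of $\Gamma$) and does not contain $z$, so transversality of $\Gamma$ to the boundary persists near $z$; hence $\Gamma$ is transverse to $\partial U'$ along the new piece of $\partial U'$, and a positive $\Gamma$-directed current can carry no mass on $\partial U$ nor on this new piece in a neighbourhood of $z$. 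Granting that $\overline{U'}$ is again $\mathcal P_\Gamma$-convex, Theorem~3.8 applied to $U'$ (using the strictly $\Gamma$-plurisubharmonic exhaustion $\rho$ of $N$) gives $z\in U'\subseteq\widehat{(\partial U')}_\Gamma$, hence a Jensen measure $\mu_z$ for $z$ with $\supp\mu_z\subseteq\partial U'$, and Theorem~\ref{T:Jensen} produces a positive $\Gamma$-directed current $T_z$ of bidimension $(1,1)$, with compact support, such that
$$i\ddbar T_z=\mu_z-\delta_z.$$

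Three observations then finish the proof. (i) Since $z\notin\partial U'$ we have $\mu_z\ne\delta_z$, so $i\ddbar T_z\ne 0$ and thus $T_z\ne 0$. (ii) $\supp(\mu_z-\delta_z)\subseteq\partial U'\cup\{z\}$ is disjoint from $U$, so $i\ddbar T_z=0$ in $U$; and $i\ddbar T_z\le 0$ off the compact set $\partial U'\cup\{z\}\subseteq\overline{U'}$, so by the support corollary (Corollary 3.5) and the $\mathcal P_\Gamma$-convexity of $\overline{U'}$ we get $\supp T_z\subseteq\overline{U'}$. (iii) Near the interior point $z$ the measure $\mu_z$ vanishes, so $i\ddbar T_z=-\delta_z$ there and hence $z\in\supp T_z$; the tangent $2$-planes of $T_z$ at points near $z$ are complex lines lying in $\Gamma$, which accumulate within $\Gamma_z$ and are therefore transverse to $\partial U$, while $T_z$ carries no mass on $\partial U$ nor on the new boundary piece near $z$; consequently $T_z$ has strictly positive mass inside $U$. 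Restricting $T_z$ to the open set $U$ and rescaling so that $\int_U T_z\wedge\omega=1$ gives the desired current.

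\emph{Main obstacle.} The delicate point is the step set aside above: verifying that $\overline{U'}$ can indeed be taken $\mathcal P_\Gamma$-convex (equivalently, that $\widehat{(\partial U'\cup\{z\})}_\Gamma$ does not escape $\overline{U'}$), since a small outward perturbation of a $\Gamma$-convex compact set need not remain $\Gamma$-convex in general. This is precisely where the transversality hypothesis must be combined with the $\mathcal P_\Gamma$-convexity of $\overline U$: because the perturbation is localized near $z$ and the added boundary is transverse to every direction of $\Gamma$ there, no $\Gamma$-pseudoconcavity is created, and one should be able to manufacture a strictly $\Gamma$-plurisubharmonic local defining function for $\overline{U'}$ out of one for $\overline U$ and the geometry near $z$. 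A secondary point to make rigorous is observation (iii), that transversality of $\Gamma_z$ to $\partial U$ forces $T_z$ to have mass in $U$ itself and not merely in the thin outer shell; this can be handled by a slicing argument for $T_z$ near $z$ along discs tangent to directions of $\Gamma_z$, or by testing $T_z$ against a function plurisubharmonic along $\Gamma$ in a neighbourhood of $z$.
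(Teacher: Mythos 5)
Your route is genuinely different from the paper's, and the two places you yourself flag as incomplete are not routine verifications: they are where the entire content of the theorem lives, and your outline does not close them. The paper does not enlarge the domain. It takes interior points $a_n\in U$ converging to the transverse boundary point $a$, applies Theorem 3.8 to get $T_n'$ with $i\ddbar T_n'=\mu_n-\delta_{a_n}$ and $\supp T_n'\subset\overline U$ (this is where the $\mathcal P_\Gamma$-convexity of $\overline U$ is used, via Corollary 3.5), normalizes to mass one, and passes to a weak limit $T$ supported on $\overline U$ with $i\ddbar T$ supported on $\partial U$. The whole point of this setup is that the only alternative to ``$T$ has mass in $U$'' is ``$T$ is carried by the hypersurface $\partial U$,'' and that alternative is excluded by testing against $\theta r^2$: on the transverse part of $\partial U$ one has $\langle\theta T,i\ddbar r^2\rangle=\langle\theta T,2i\partial r\wedge\overline\partial r\rangle>0$ wherever $T$ has mass, while integration by parts gives $\langle\theta T,i\ddbar r^2\rangle=\lim c_n\langle\mu_n-\delta_{a_n},\theta r^2\rangle\le 0$ --- and, crucially, the cross terms $\partial\theta\wedge\overline\partial(r^2)\wedge T$ vanish precisely because $r^2$ vanishes to second order on $\supp T\subset\partial U$. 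On the non-transverse part $Z$ the current would satisfy $i\ddbar T\ge 0$ near $Z$ (the Dirac masses accumulate at $a\notin Z$) and hence be zero, contradicting mass one.

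In your version the dichotomy is much weaker. Your current $T_z$ lives on $\overline{U'}$, and ruling out mass on the hypersurfaces $\partial U$ and $\partial U'$ does not rule out that all of its mass sits in the full-dimensional shell $U'\setminus\overline U$; your step (iii) asserts ``consequently $T_z$ has strictly positive mass inside $U$,'' but nothing in (i)--(iii) implies this, and the $r^2$ integration by parts no longer closes up because the cross terms do not vanish on a current that is merely near, rather than on, $\{r=0\}$. (Also, speaking of ``the tangent $2$-planes of $T_z$'' is not legitimate: $\Gamma$-directedness is defined by duality against $\mathcal P_\Gamma$, not by pointwise tangent planes.) Your ``main obstacle'' --- that $\overline{U'}$ remains $\mathcal P_\Gamma$-convex, which you need so that $\supp T_z\subset\overline{U'}$ and the shell argument even makes sense --- is likewise left open; note that Theorem 3.8 itself does not need it, but your step (ii) does. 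So the proposal identifies the right difficulties but does not contain the idea that resolves them; the paper's idea is exactly the normalization-and-limit device that converts ``no mass in $U$'' into ``concentration on the hypersurface $\partial U$,'' where transversality can be exploited cleanly.
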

\begin{proof}  Let $V$ denote the non-empty open set  of points $a$ in $\partial U$  such that every vector in $\Gamma_a$ is transverse to $\partial U$.  For $a$ in $V$ define $C(a)$ as the convex compact set of  positive $(1,1)$ $\Gamma$- directed currents $S,$ of mass one, supported on $\overline{U}$ and such that 
 $i\ddbar S=\lim (\nu^+_n-\nu^-_n).$ Here, $ \nu^+_n$ are positive measures supported on $\partial U$
 and $\nu^-_n$ are positive measures whose support converges to the point $a.$
 
 For $S$ in $C(a),$ $i\ddbar S$ is supported on $ \partial U.$ We need to show that the currents in
  $C(a)$ are not supported on $\partial U.$ Let $a=\lim a_n$ with $a_n$ in $U$. It follows from 
  Theorem 3.8, that there is a sequence $T'_n$ of positive $(1,1)$  $\Gamma$-directed  currents and a sequence $\mu_n$ of Jensen measures supported on 
  $\partial U$ such that
   $$ i\ddbar T'_n=\mu_n-\delta_{a_ n}.$$ 
   Define $T_n =c_nT'_n,$ where $c_n$ is a constant such that $T_n$ is of mass one. Assume $T= \lim T_n$. We show that $T$ has non-zero mass on $U.$ Suppose on the contrary that $T$ is supported on $\partial U$.
    We prove first that it has zero mass on $V.$ Let $\theta$ be a nonnegative test function supported in a neighborhood of a point $x$ in $V$ and identically one near $x.$

 From the transversality hypothesis on $V$ it follows that:
   $$\langle  \theta T,i\ddbar r^2\rangle= \langle  \theta T,2i\partial r\wedge \overline{\partial r} \rangle, $$
   is strictly positive if $T$ has positive mass near $x$ and $T$ is directed by vectors transverse to $V.$
   The vanishing of $r^2$ to second order on $\partial U$ implies that:
   $$\langle  \theta T,i\ddbar r^2\rangle=  \langle  \theta i\ddbar T, r^2\rangle
    =c_n \langle \mu_n-\delta_{a_ n} , \theta r^2\rangle.
 $$ 
      Since $ \mu_n$ is  supported on the boundary, the limit is non-positive. 
    
    So the  positive current $T$ is supported on the compact  $Z= \partial U\setminus V.$ Since $a$ is not in $Z$, we then have that 
    $i\ddbar T \geq 0.$ This is only possible if $T=0.$ Hence $T$ gives mass to $U.$ Indeed, if $\rho$ is strictly psh negative in a neighborhood of
$Z$, then
$$\langle  T,i\ddbar \rho \rangle =\langle i\ddbar  T, \rho \rangle$$ which is negative. So $T=0.$
 
Similarly, if $a,$ $a'$ are two points in $V,$ then $C(a),$ is disjoint from $C(a').$ If $T$ belongs to the intersection, it is positive and
$i\ddbar T \geq 0.$ Hence it is equal to zero.
    \end{proof}
    
    \begin{remark}\rm 
Let  $K$ be  a compact set on $\partial U.$ Assume  that $ \widehat K_\Gamma$  is non-trivial and contained in $\bar U.$ Assume also that $\Gamma$ is transverse to $\partial U.$ Then there is a positive $\Gamma$-directed current $T$ of mass  $1$, supported on $ \widehat K_\Gamma$   such that $i\ddbar T=0$ in $U.$ Indeed, since currents with negative $i\ddbar,$ cannot be compactly supported in $U,$ there is a sequence  $a_n$ in $U\cap  \widehat K_\Gamma,$ such that $a=\lim a_n$ is in $K.$ We can then apply the scaling in the proof of the Theorem 3.11, in order to construct 
positive currents.

\end{remark}

 \begin{corollary}
 Let $N$ and $U$ be as in Theorem 3.10. Let $\mathcal F$ be a foliation by Riemann surfaces in a neighborhood of $\bar U.$ Assume the leaves are transverse to $\partial U$ on a non empty open set $V.$ Then there exists a positive current $T$ directed by $\mathcal F$ of mass1 on $U,$ satisfying
 $i\ddbar T=0$ in $U.$ 
 
 If the singularities of the foliation are isolated points and each singularity is non degenerate, then the current  $T$ is $L^2$ regular. More precisely, there is a $(0,1)$-form $\tau$ such that $\partial T= \tau \wedge T$ and $\tau$ is $L^2$ integrable with respect to $T$ on 
 $U.$
 \end{corollary}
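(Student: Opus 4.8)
The plan is to read off from $\mathcal F$ a closed field of one-dimensional cones, apply Theorem 3.10 to produce the $\ddbar$-closed directed current, and then invoke the local structure theory of such currents to obtain the $L^2$ estimate, the one genuinely hard point being the behaviour near the singularities. For the first assertion: for $z$ a regular point of $\mathcal F$ in a neighbourhood of $\bar U$ let $\Gamma_z$ be the complex line of $(1,0)$-vectors tangent to the leaf through $z$, and put $\Gamma:=\overline{\bigcup_z\Gamma_z}$, the closure taken over the regular points. Then $\Gamma$ is a closed subset of the tangent bundle with conical fibres and $\Gamma_z\neq 0$ for every $z$: over an isolated singularity $p$ the fibre $\Gamma_p$ is the non-empty cone of limits of nearby leaf directions. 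Because $N$ carries a smooth strictly plurisubharmonic exhaustion $\rho$, we have $\langle i\ddbar\rho(z),i\xi_z\wedge\bar\xi_z\rangle>0$ for every non-zero $(1,0)$-vector, hence a fortiori for every non-zero $\xi_z\in\Gamma_z$, so $\rho$ is a strict $\Gamma$-exhaustion and $\rho\in\mathcal P_\Gamma$. Since $\mathcal P_0\subset\mathcal P_\Gamma$ gives $\widehat K_\Gamma\subset\widehat K$, the set $\bar U$ is $\mathcal P_\Gamma$-convex as soon as it is pseudoconvex, so the standing hypotheses of Theorem 3.10 are in force for this $\Gamma$; and transversality of the leaves to $\partial U$ over the non-empty open set $V$ says exactly that every non-zero vector of the line $\Gamma_a$, $a\in V$, is transverse to $\partial U$. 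Theorem 3.10 then yields a positive $\Gamma$-directed current $T$ of mass one in $U$ with $i\ddbar T=0$ in $U$. On the regular part of $\mathcal F$ this $T$ is directed by the leafwise tangent lines, and a positive current of bidimension $(1,1)$ carries no mass on a discrete set, so $T$ does not charge the singular points: $T$ is directed by $\mathcal F$.

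\textit{$L^2$ regularity away from the singularities.} Assume now the singular set of $\mathcal F$ consists of non-degenerate, hence isolated, points. In a flow box $D\times\Lambda$ with plaques $D\times\{t\}$ disjoint from the singularities, write $T=\int_\Lambda h_t\,[D\times\{t\}]\,d\nu(t)$ with $h_t>0$; the equation $i\ddbar T=0$ forces every $h_t$ to be harmonic on $D$. Differentiating plaque by plaque, $\dbar T=\tau\wedge T$, where $\tau$ is the $(0,1)$-form equal along each leaf to $\dbar\log h_t$, defined on $\supp(T)$ independently of the chart; this is the form $\tau$ of the statement. Its mass density with respect to $T$ is, along leaves, a constant multiple of $|\nabla h_t|^2/h_t$, and the identity $\Delta(h_t\log h_t)=|\nabla h_t|^2/h_t$ valid for $h_t$ harmonic lets one bound $\int|\tau|^2\,dT$ over the flow box — by integrating by parts against a cutoff function — by a finite quantity. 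Hence $\tau\in L^2(T)$ on any relatively compact part of $U$ avoiding the singularities of $\mathcal F$.

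\textit{Near the singularities, and the main obstacle.} It remains to bound $\int|\tau|^2\,dT$ in a punctured neighbourhood of each singular point $p$, and this is the only delicate step. By non-degeneracy $\mathcal F$ is, after a holomorphic change of coordinates, linear (or in Poincar\'e--Dulac normal form) near $p$, and the associated holonomy is hyperbolic; running the same energy estimate on this local model and summing over the plaques shows that the leafwise harmonic densities of $T$ contribute only finite energy near $p$. This is precisely the finite-energy phenomenon established in \cite{FornaessSibony} (see also \cite{BerndtssonSibony}), and transplanting that estimate to the present local situation finishes the proof. Everywhere else the flow-box plus harmonic-energy computation is routine; it is exactly at the singular points that one needs the normal form and the control it gives on a directed $\ddbar$-closed current, which is where non-degeneracy is essential.
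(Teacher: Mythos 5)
The first half of your argument is correct and is exactly the paper's route: take $\Gamma_z$ to be the tangent line to the leaf through $z$ (closed up over the singular set), observe that a strictly psh exhaustion of $N$ is a fortiori strictly $\Gamma$-psh, note that transversality of the leaves to $\partial U$ on $V$ is literally the transversality hypothesis of Theorem 3.10, and apply that theorem (the paper invokes Remark 3.11, which is the same mechanism). Your remark that $T$ charges no mass on the discrete singular set, so that $\Gamma$-directedness becomes $\mathcal F$-directedness, is a reasonable supplement that the paper does not even spell out.

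The second half has a genuine gap, precisely at the point you yourself identify as the hard one. Two issues. First, away from the singularities your $\Delta(h\log h)=|\nabla h|^2/h$ identity does not integrate over the plaques: $h\log h$ is nonlinear in $h$, so the bound you get on a single plaque in terms of $h_t\log h_t$ on the support of $\Delta\chi$ cannot be summed against $d\nu(t)$ using only the finiteness of the mass $\int\!\!\int h_t\,d\nu$. The clean pointwise statement — and the one the paper uses — is the Harnack/Ahlfors estimate $i\tau\wedge\bar\tau\leq C\,g_P$, where $g_P$ is the leafwise Poincar\'e metric; on plaques of bounded hyperbolic geometry this immediately gives $\int|\tau|^2\,dT\leq C\|T\|$. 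Second, and more seriously, near a non-degenerate singularity ``running the same energy estimate on the local model and summing over the plaques'' is not an argument: in a fixed flow box near the singularity the number of plaques per leaf is unbounded, the leaves accumulate on the separatrices, and $g_P$ blows up like $\big(|z|\,|\log|z||\big)^{-2}$. The pointwise bound $i\tau\wedge\bar\tau\leq C\,g_P$ therefore only reduces the problem to showing that $T$ has \emph{finite mass with respect to the Poincar\'e metric} near such a singularity. That is a specific, non-trivial estimate — Proposition 4.2 of \cite{DinhNguyenSibony} — and it is the step where non-degeneracy of the singularity is actually used. It is not the finite-energy theorem of \cite{FornaessSibony}, which concerns laminations without singularities (what \cite{FornaessSibony} supplies here is the Ahlfors-lemma bound on $i\tau\wedge\bar\tau$, not the mass estimate). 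So your proof is missing the one input that makes the singular case work; citing the correct proposition of \cite{DinhNguyenSibony} and combining it with the pointwise Ahlfors bound would close it, and is exactly what the paper does.
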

\begin{proof}
For the first part, it suffices to apply Remark 3.11. For the second part, it suffices to use the decomposition on the flow boxes and to  to observe as in  \cite{FornaessSibony}, thanks to the Ahlfors Lemma, that the metric 
$i\tau\wedge\bar\tau$ is bounded by the Poincar\' e metric.  Moreover  as shown in \cite{DinhNguyenSibony}, Proposition 4.2, the current $T$ has finite mass with respect to the Poincar\'e metric.
The result follows. 
\end{proof}

\begin{remark}\rm
In  \cite{BerndtssonSibony}, $L^2$ estimates for the $\dbar$ equation on $L^2$-regular currents are proved. One can observe that to get the results there, it is enough to assume that the weights $\phi$
are $T$-subharmonic.
\end{remark}

\begin{example} \rm Consider in $\C^4$  the  forms. $$
\alpha:=dz_2-z_3 dz_1,\qquad  \beta:=dz_3-z_4dz_1.$$
The  system  $\{\alpha,\beta\}$  does not satisfy the  Frobenius condition. Moreover through  every point there are infinitely many   holomorphic  discs. Let $f$ be  any holomorphic function. Consider the disc    defined by $z_2:=f(z_1),$  $z_3:=f'(z_1)$
and $z_4:=f''(z_1).$ These  discs are tangent to  $\{\alpha,\beta\}.$ The example is just the complex analog of the  Engel normal form in Engel's condition,
\cite[p. 50]{Bryant}.

More generally it is  enough  to assume that the ideal generated by 2 forms $\{\alpha,\beta\}$  satisfy Engel's condition   at each point, \cite[p. 50]{Bryant}. 
   Let $\Gamma_z$ be the space of vectors $\xi_z$ such that,
$\alpha(z)\xi_z=0,$  $\beta(z)\xi_z=0.$  Our description of hulls applies. The hulls are non-trivial. We then get global dynamical systems of interest.

\end{example}

\begin{example} \rm 
Let $\rho$  be  a  smooth function in $\C^k.$ Let $\Gamma_z:=\ker \partial \rho (z).$ Let  $h$ be a positive smooth function on $\R.$ Define $T = h(\rho) i\partial \rho\wedge \overline\partial \rho.$ If the function $h$ is  non-decreasing and $\rho$ is psh, then $i\ddbar T\leq 0.$ So hulls are non-trivial.

For an arbitrary $\rho$, a function $u$ is in $\mathcal P_\Gamma,$ iff $i\ddbar u\wedge i\partial \rho\wedge \overline\partial \rho$ is positive as a $(2,2)$-form.

If the function $\rho$ is strictly psh, then there are no
non trivial $\Gamma$-holomorphic discs, i.e. non-constant holomorphic discs tangent to the distribution $\Gamma_z.$

\end{example}

\begin{example}\rm
The most obvious $\Gamma$-directed currents  are  $\Gamma$-directed holomorphic  discs, when they exist. More precisely, if $\phi:\  D\to\C^k$
is  holomorphic, with $\phi(\zeta)= z,$  then $\phi'(\zeta)\in\Gamma_z$ for each $\zeta\in D.$ The current $[\phi_*(D)] $ is  $\Gamma$-directed, when it is  of
finite mass. Let  $\lambda$ denote the Lebesgue measure on the unit circle.

One can  also introduce the current  $\tau^\phi:=\phi_*\big(\log {1\over |\zeta|}\big).$ It is   shown in \cite[Example 4.9]{DuvalSibony}  that if $\phi$ is bounded and $\phi (0)=x$, then the current
$\tau^\phi$ has finite mass and:
\begin{equation}\label{e:Jensen}
 i\ddbar \tau^\phi=(\phi)_*(\lambda)-\delta_x.
 \end{equation}
Moreover the mass of $\tau^\phi$ is equivalent to
$$
\|\tau^\phi\|=\int  (1-|\zeta|)|\phi'(\zeta)|^2d\lambda(\zeta).
$$
This is precisely equivalent to the Nevanlinna characteristic of $\phi,$ $T(\phi,1),
$ considered as  a map from  $D$  to $\P^k.$ Jensen's formula gives:
\begin{eqnarray*}
T(\phi,1)= \int_0^1{dt\over t}\int_{D_t}  \phi^*(\omega)&=&\int_{D} \log {1\over |\zeta|} \phi^*(\omega)\\
&=&
{1\over 2\pi} \int_0^{2\pi} \log (1+|\phi|^2)(e^{i\theta})d\theta- \log(1+|\phi(0)|^2).
\end{eqnarray*}

A  family of maps is uniformly  bounded, in the Nevanlinna class, if  there is a constant $C$ such that
for every $\phi$ in the family,
 $\limsup_{r<1} \int_0^{2\pi} \log (1+|\phi|^2)(r e^{i\theta})d\theta \leq C.$ 
 Then the family  $\tau^\phi$  is a relatively compact   family of $\Gamma$-directed currents. This
 permits in particular to prove equation (1).

The currents $\tau^\phi:=\phi_*\big(\log {1\over |\zeta|}\big),$ have been used in Foliation Theory and holomorphic dynamical systems. They permit  to prove ergodic theorems and to study rigidity properties, see  \cite{FornaessSibony},  \cite{DinhSibony}, \cite{DinhNguyenSibony}.
\end{example}
\begin{remark}\rm
 In \cite{Wold},  Wold  gives an independent proof of a weaker estimate of the mass of the currents $\tau^\phi:=\phi_*\big(\log {1\over |\zeta|}\big)$ in the above example. He then deduces 
  Theorem  \ref{T:Jensen} in the context of polynomial hulls as a consequence of  the Bu-Schachermeyer, Poletsky    Theorem  (\cite{Bu,Poletsky}). Their theorem says that if $x$ is in the polynomial hull of a compact $K,$ then for  every  Jensen measure $\nu_x$ on $K,$ there is  a  sequence of $\Gamma$-holomorphic   discs $\phi_n: \overline{D}\to\C^k$
such that $\phi_n(0)=x$ and $(\phi_n)_*(\lambda)\to\nu_x.$ 
  
 Following this approach, Drinovec-Forstneric in \cite{DF}, proved Theorem \ref{T:Jensen} when the cone $\Gamma$ is the null cone in  $\C^3,$ i.e.  a vector $a=(a_1,a_2,a_3)$ is in $\Gamma_z$  iff
$
 \sum_{j=1}^3 a_i^2=0$. Their proof requires to construct first $\Gamma$-discs, satisfying  a Bu-Schachermeyer, Poletsky    type Theorem, for the null-cone. 
\end{remark}


\section{Homogeneous $\Gamma$-Monge-Amp\`ere equation and  Dirichlet problem}


Let $\Omega\Subset \C^k$  be  a  smooth  strictly pseudoconvex  domain. Let $u$ be a continuous function on $\mathcal\partial\Omega.$ Using  Perron's method, Bremermann considered the following
envelope. He defined:
$$
\hat u:=\sup\left\lbrace v:\ v\quad \text{psh in}\ \Omega,\quad v\leq u\quad \text{on}\ \partial\Omega   \right\rbrace.
$$  
It is    easy to show that $\hat u$ is continuous in $\overline\Omega,$ psh
in $\Omega.$ Bedford and  Taylor \cite{BedfordTaylor}   showed that $ (i\ddbar \hat{u})^k=0 $ in $\Omega.$
To  explain the  property, it  was tempting  to think that through every point $x\in\Omega,$ there is a non-trivial holomorphic disc  $D_x,$  
such that $\hat u|_{D_x}$ is  harmonic.  That  would explain  that in some sense   there is    a  ``zero  eigenvalue" for $i\ddbar u.$
The  author has given an  example showing that it is not always the  case. More precisely, let $B $ denote the unit ball in 
$\C^k$. There is   $u\in\mathcal C^{1,1}(\overline B),$ such that
 $ (i\ddbar \hat{u})^k=0 $  and the  restriction of $u$ to any holomorphic disc through zero, is  not harmonic. We show  below, that indeed
the result  becomes true if we  replace discs through $x,$ by positive currents $T,$ such that $ i\ddbar T\leq 0,$ strictly negative at $x.$

 We will  consider a  compact set $K,$  such that  every point of $K$ is in the Jensen boundary for  $\mathcal P_\Gamma.$  More   precisely,
if $\mu_x$ is a  Jensen  measure  for $x,$ supported on $K,$  then $\mu_x=\delta_x.$  Let $u$ be  a  lsc  bounded
function  on $K.$   We define  on $\widehat{K}_\Gamma$  the  function 
$$
\hat {u}_\Gamma(x):=\sup\left\lbrace v(x):\ v\in\mathcal P_\Gamma,\quad v\leq u\quad \text{on}\ K   \right\rbrace.
$$  
We will call $\hat {u}_\Gamma$, the solution of the $\Gamma$-Monge-Amp\`ere problem.
Our assumption  implies that $\hat {u}_\Gamma$ is  lsc.  The Choquet-Edwards theorem implies that
$$
\hat {u}_\Gamma(x)=\inf_{\mu_x\in J_x}\int    ud\mu_x.
$$
So in particular, $
\hat {u}_\Gamma=u$   on $K.$

\begin{theorem}  Let $K,$ $\widehat{K}_\Gamma,$  $u,$ $\hat {u}_\Gamma$ as   above. Then for every $x\in\widehat{K}_\Gamma\setminus K,$
there is  a Jensen measure $\mu_x,$ and a positive current  $T_x\geq 0,  \Gamma$-directed, of bidimension $(1,1),$ such that  $
 i\ddbar T_x=\mu_x-\delta_x$  in $\C^k\setminus K.$ Moreover, $i\ddbar\hat {u}_\Gamma\wedge T_x =0$ on 
$\widehat{K}_\Gamma\setminus K.$ 

 Conversely, let $\tilde u$ be  a  lsc function on $\widehat{K}_\Gamma$  such that
${\tilde{u}}=\lim\uparrow  u_n,$ where each $u_n$ is a finite sup of functions in $\mathcal  P_\Gamma.$ Assume  that for every $x\in \widehat{K}_\Gamma\setminus K,$  there is 
a positive  current $S_x$ with $x\in \supp(S_x),$  such that $i\ddbar S_x\leq 0$ on  $\C^n\setminus K,$  and   such that $i\ddbar \tilde u\wedge S_x=0$ on $\widehat K\setminus K.$  Then $\tilde u=\hat {u}_\Gamma,$ with $u:=\tilde u|_K.$
\end{theorem}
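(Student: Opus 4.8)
The plan is to treat the two assertions separately. For the first (direct) part, given $x\in\widehat K_\Gamma\setminus K$, I would first observe that since every point of $K$ lies in the Jensen boundary for $\mathcal P_\Gamma$, the Choquet–Edwards representation already gives $\hat u_\Gamma(x)=\inf_{\mu_x\in J_x}\int u\,d\mu_x$; fix a Jensen measure $\mu_x$ on $K$ realizing (or nearly realizing) this infimum. Theorem \ref{T:Jensen} then produces a positive $\Gamma$-directed current $T_x$ of bidimension $(1,1)$ with compact support and $i\ddbar T_x=\mu_x-\delta_x$; since $\mu_x$ lives on $K$, this identity reads $i\ddbar T_x=-\delta_x\le 0$ on $\C^k\setminus K$, so by the Corollary to the maximum principle $\supp(T_x)\subset\widehat K_\Gamma$, and the maximum principle (Theorem 3.4 ii) applies to $T_x$. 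The substance is the equality $i\ddbar\hat u_\Gamma\wedge T_x=0$. Here I would use that $\hat u_\Gamma$ is a decreasing limit of... more precisely, that $\hat u_\Gamma=\inf$ over admissible $v\in\mathcal P_\Gamma$, hence is (after a standard regularization, replacing the infimum by a decreasing sequence of finite infima of functions in $\mathcal P_\Gamma$, which lie in $\overline{\mathcal P}_\Gamma$) a candidate for the $T_x$-harmonicity test. The point is that $i\ddbar\hat u_\Gamma\wedge T_x\ge 0$ because $\hat u_\Gamma$ is $T_x$-subharmonic, while pairing against $1$ (via a cutoff, as in the proof of Theorem 3.4) and using Jensen's formula $\int\hat u_\Gamma\,d\mu_x=\hat u_\Gamma(x)+\langle T_x,i\ddbar\hat u_\Gamma\rangle$ together with $\hat u_\Gamma(x)=\int u\,d\mu_x=\int\hat u_\Gamma\,d\mu_x$ (the last since $\hat u_\Gamma=u$ on $K$) forces $\langle T_x,i\ddbar\hat u_\Gamma\rangle=0$; a nonnegative current of total mass zero vanishes, giving $i\ddbar\hat u_\Gamma\wedge T_x=0$.

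For the converse, suppose $\tilde u=\lim\uparrow u_n$ with each $u_n$ a finite supremum of functions in $\mathcal P_\Gamma$, and suppose the currents $S_x$ exist with $i\ddbar S_x\le 0$ off $K$, $x\in\supp(S_x)$, and $i\ddbar\tilde u\wedge S_x=0$. I want $\tilde u=\hat u_\Gamma$ where $u=\tilde u|_K$. One inequality is immediate: each $u_n$, being a finite sup of elements of $\mathcal P_\Gamma$, lies in $\overline{\mathcal P}_\Gamma$ and (after checking $u_n\le u$ on $K$, which follows from $u_n\le\tilde u$ and $\tilde u=u$ on $K$) is an admissible competitor, so $u_n\le\hat u_\Gamma$ on $\widehat K_\Gamma$, hence $\tilde u\le\hat u_\Gamma$. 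For the reverse inequality, fix $x\in\widehat K_\Gamma\setminus K$ and a normalization $S_x$ of mass, say, arranged so that $i\ddbar S_x=\sigma_x-\delta_x$ with $\sigma_x$ a positive measure; since $i\ddbar S_x\le 0$ on $\C^k\setminus K$, the positive part $\sigma_x$ is supported on $K$, and integrating $i\ddbar S_x$ against $1$ shows $\sigma_x$ is a probability measure, i.e. a Jensen measure for $x$ on $K$. Now apply the maximum principle part ii) of Theorem 3.4 with $U$ a large ball minus a shrinking neighborhood of $K$, $T=S_x$, $\nu^+=\sigma_x$, and the $T$-harmonic function $\tilde u$ (using $i\ddbar\tilde u\wedge S_x=0$): for any competitor $v\in\mathcal P_\Gamma$ with $v\le u$ on $K$, we get $v\le\tilde u$ on $\supp(S_x)$, in particular $v(x)\le\tilde u(x)$; taking the supremum over $v$ yields $\hat u_\Gamma(x)\le\tilde u(x)$.

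The main obstacle, and the step I would be most careful about, is the regularization needed to make $\hat u_\Gamma$ (resp. $\tilde u$) a legitimate test object for $T_x$-harmonicity, i.e. producing the monotone sequence of smooth functions with $0\le i\ddbar u_n\wedge T_x\to 0$ demanded by the definition of $T$-harmonic. The usc/lsc subtleties — $\hat u_\Gamma$ is lsc by the Jensen-boundary hypothesis, but one wants to approximate from below by smooth functions while controlling $i\ddbar(\cdot)\wedge T_x$ — are exactly where the argument must be run with the cutoff trick from the proof of Theorem 3.4 (so that the cutoff $\rho$ "does not appear" in the final integration by parts) rather than naively. A secondary technical point is verifying that finite infima of $\mathcal P_\Gamma$ functions, and finite suprema, behave correctly: suprema stay in $\overline{\mathcal P}_\Gamma$ by the composition property $H(u_1,\dots,u_m)\in\overline{\mathcal P}_\Gamma$ with $H=\max$, while the infimum defining $\hat u_\Gamma$ must be handled by passing to $\overline{\mathcal P}_\Gamma$ and using that the hull is unchanged under this enlargement, as recorded earlier in Subsection \ref{ss:directed_currents_hulls}.
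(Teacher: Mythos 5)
Your proposal follows essentially the same route as the paper's proof: Choquet--Edwards to pick a Jensen measure $\mu_x$ on $K$ realizing $\hat u_\Gamma(x)=\int u\,d\mu_x$, Theorem \ref{T:Jensen} to produce $T_x$ with $i\ddbar T_x=\mu_x-\delta_x$, Jensen's formula applied to a monotone approximating sequence to force $\langle T_x,i\ddbar \hat u_\Gamma\rangle=0$ and hence $i\ddbar\hat u_\Gamma\wedge T_x=0$, and the maximum principle (part ii) applied to $S_x$ on a large ball minus a shrinking neighborhood of $K$ for the converse. The only blemishes are expository: $\hat u_\Gamma$ is a \emph{supremum} of admissible $v\in\mathcal P_\Gamma$, hence an increasing limit of smoothed finite maxima which stay in the cone --- not an ``infimum of finite infima,'' which would leave the cone (the infimum in Choquet--Edwards is over Jensen measures, not over functions); and the normalization $i\ddbar S_x=\sigma_x-\delta_x$ in the converse is neither given nor needed, since part ii) of the maximum principle already handles a general decomposition $\nu^+-\nu^-$ with $\nu^+$ supported in $K$.
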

\begin{proof}
The  Choquet-Edwards theorem implies that for every $x\in  \widehat{K}_\Gamma\setminus K,$ there is  a Jensen measure $\mu_x$ on $K,$
such that $ \hat {u}_\Gamma(x)=\int ud\mu_x.$ Theorem \ref{T:Jensen} implies that there is a
  $\Gamma$-directed current $T_x\geq 0,$ such that $i\ddbar T_x=\mu_x-\delta_x.$   We show that $\langle i\ddbar T_x,  \hat u_\Gamma\rangle=0.$ The function  $\hat u_\Gamma$ is an increasing limit of  a  sequence $v_n\in\mathcal P_\Gamma.$ 
 Hence $\lim_n \langle i\ddbar T_x,   v_n\rangle=0,$  and from Jensen's formula, $0\leq   T_x\wedge i\ddbar  v_n\to 0.$ 
 It follows that, $i\ddbar \hat u_\Gamma\wedge T_x =0.$ We have used implicitly that we can replace the supremum
 by a composition with an appropriate smooth convex function.

For the  converse, let $u:=\tilde u|_K.$  Let $v\in\mathcal P_\Gamma,$  such that $v\leq \tilde u$ on $K.$ Since $\tilde u$  satisfies
the  hypothesis of the maximum  principle, with respect  to $S_x,$  we  get that $v\leq \tilde u$ on $S_x.$  So $ \hat {u}_\Gamma\leq \tilde u.$
It is   clear that $\tilde u\leq \hat {u}_\Gamma .$ 
\end{proof}

\begin{remarks} \rm 1.  In the classical Monge-Amp\`ere case, i.e when  $\mathcal P_\Gamma=\mathcal P_0,$ it is   enough to assume that $\bigcup \supp(S_x)$ is  of full measure
in the domain $\Omega$.  We then  get  $\hat u= \tilde u$ on a set of full measure and hence   
$\hat u= \tilde u$ by  pluri-subharmonicity.

2.   Assume   that  for  every  Jensen measure $\nu_x$ on $K,$ there is  a  sequence of $\Gamma$-holomorphic   discs $\phi_n: \overline{D}\to\C^k$
such that $\phi_n(0)=x$ and $(\phi_n)_*(\lambda)\to\nu_x.$ Here  $\lambda$ denotes the Lebesgue measure, of mass one, on the unit circle.  According to  \cite{Bu,Poletsky}, this  is  the case when $\mathcal P_\Gamma=\mathcal P_0.$ We then have  the following interpretation of the harmonicity of $\hat u$ on $T_x,$ in the context of the classical Monge-Amp\`ere equation.

  Let $\mu_x$  be a Jensen measure on $\partial\Omega,$ such that 
$\int\hat ud\mu_x -\hat u(x)=0.$ Let $\phi_n$ the sequence of holomorphic maps associated to $\mu_x$.  In the present situation, it is easy to show that the function $\hat u,$ is continuous. Then, using  Jensen's formula we get:

\begin{eqnarray*}
\limsup_n\int_0^1 {dt\over t}\int_{D_t} i\ddbar (\hat u\circ \phi_n)&=&\lim_n \int_0^{2\pi} (\hat u\circ \phi_n)(e^{i\theta})d\theta-\hat u(x)\\
&=&\int\hat ud\mu_x -\hat u(x)=0.
\end{eqnarray*}
Observe that $i\ddbar (\hat u\circ \phi_n)\geq 0.$ Hence we  see that $\hat u$ is asymptotically harmonic on the holomorphic
discs $\phi_n:  \overline{D} \to\C^k.$

\end{remarks}


\section{Localization}


We  now  address a  localization problem. Let $v $ be a continuous  function on $\widehat{K}_\Gamma.$  Assume that $v$ is locally    a  supremum  of  functions
in $\mathcal P_\Gamma.$ Is it  globally   the  supremum   of  functions
in $\mathcal P_\Gamma$  ? In particular, is the $\Gamma$-Monge-Amp\`ere problem, a local problem?

More generally, consider a convex cone $\mathcal P$  of $\mathcal C^2$-functions in $\C^k,$ containing  a smooth strictly psh function, the constants and  separating points. We assume also that if $u\in \mathcal P$ and $h$ is  a convex increasing function, then
$h\circ u\in\mathcal P.$ This is a crucial assumption on $\mathcal P.$ For a compact $K,$  we can define the hull  $\widehat{K}_P$ of $K$ with respect to  $\mathcal P.$ We will assume that $\widehat{K}_P,$ is always compact. A basic example is as follows. 

Let $R$ be a strongly positive $(p, p)$ form or current. Define  the cone of functions,
$$
\mathcal P_R:=\left\lbrace u\in\mathcal C^2:\   i\ddbar u\wedge R \geq 0 \right\rbrace
.$$
The cone $\mathcal P_R$ satisfies the following property. Let $u_1,...,u_m$ be  functions in  
$
\mathcal P_R$  and let $H$ be a  function in $ \R^m,$ convex and increasing in each variable, then $ H(u_1,...,u_m)$ is in $\mathcal P_R.$ We can  consider 

$$
\mathcal P^0:=\left\lbrace  T:\  T\geq 0,\quad  \langle \chi T, \ddbar u\rangle \geq 0  \quad\text{for every}\ u\in\mathcal P   \right\rbrace, $$
where $\chi$ is an arbitrary cutoff function. The elements of $\mathcal P^0$ are bi-dimension $(1,1)$
currents. We describe a very special case of the above situation.

 Suppose,  the current $R$ is strongly positive and closed, with no non-constant holomorphic discs in its support \cite{DuvalSibony}. Then there will be no non-trivial holomorphic disc $\varphi:\ D\to \C^k,$ 
such that $u\circ \varphi$ is subharmonic for every $u$  in $\mathcal P_R.$ But hulls are non-trivial. 
The case where $R$ is strongly positive and $i\ddbar$- negative, in an open set  $U,$ is also of interest.

The  previous  theory is  valid for $\mathcal P.$
The maximum principle for $\mathcal P,$ with respect to $T\in\mathcal P^0$, is   valid if $i\ddbar T\leq 0.$ In particular, if we consider the cone $\mathcal P_R$ and we assume that $i\ddbar R\leq 0,$ the 
maximum principle holds for functions on support $R$ which are decreasing limits of functions in 
$\mathcal P_R.$ 

 If $x\in  \widehat{K}_P,$  we have Jensen measures $\mu_x$ for $x,$  with support in $K.$  We can also  solve the  equation $i\ddbar T=\mu_x  -\delta_x,$ with $T\geq 0,$ in  $\mathcal P^0.$ Hence  get the local  maximum  principle for    $\widehat{K}_P. $

 We now introduce the Monge-Amp\`ere  problem, with respect  to $\mathcal P.$  Let $u$ be 
a  bounded  lsc  function  on $K.$ Define $\hat u$ on   $\widehat{K}_P$ as
$$
\hat u:=\sup\left\lbrace v:\ v\leq u\quad\text{on}\quad K,\quad v\in \mathcal P   \right\rbrace.
$$
\begin{proposition}\label{P:Dirichlet}
Let $K$ be  a  compact set in $\C^k$ and let $u$ be a bounded  lsc   function  on $K.$
Let $U$ be  an  open set in $\widehat{K}_P$. Define  $v:=\hat u|_{\partial U\cup (U\cap K)}.$ Let $\hat v$ denote the solution of the
Monge-Amp\`ere problem on $U,$  with respect to $\mathcal P|_U$. Then $\hat v=\hat u$ on $U.$
 In particular, if  for $\varphi\in\mathcal P,$ $\varphi\leq \hat u$
on $\partial U\cup (U\cap K),$ then $\varphi\leq \hat u$ on $U.$
\end{proposition}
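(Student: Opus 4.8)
The plan is to show the two inequalities $\hat v\le\hat u$ and $\hat u\le\hat v$ on $U$ separately, both by soft arguments using the Jensen-measure/current machinery already developed, rather than by any PDE estimate. First I would record the easy direction. Any $\varphi\in\mathcal P$ (restricted to $U$) with $\varphi\le v$ on $\partial U\cup(U\cap K)$ is, in particular, a competitor in the envelope defining $\hat u$ only after one checks it is dominated by $u$ on $K\cap U$; but $v=\hat u$ there and $\hat u\le u$ on $K$ by the basic properties of the envelope, so $\varphi\le\hat u$ on $K\cap U$. To pass from the boundary estimate on $\partial U$ to an estimate on all of $U$ I would invoke the maximum principle already proved: for each $x\in U\setminus K$ pick a Jensen measure $\mu_x$ for $x$ supported on $K$ (using $x\in\widehat K_P$ and that every point of $K$ lies in the Jensen boundary, so $\supp\mu_x\subset K$), and solve $i\ddbar T_x=\mu_x-\delta_x$ with $T_x\ge0$ in $\mathcal P^0$ via the analogue of Theorem \ref{T:Jensen} for $\mathcal P$. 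Apply the maximum principle for $\mathcal P$ with respect to $T_x$ on the open set $U$: since $\varphi\le\hat u$ on $(\partial U\cap\supp T_x)\cup(U\cap K)$ and $i\ddbar T_x=\mu_x-\delta_x$ has positive part $\mu_x$ supported in $K\cap U$, we conclude $\varphi(x)\le\hat u(x)$. Taking the supremum over all such $\varphi$ gives $\hat v\le\hat u$ on $U$.

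For the reverse inequality $\hat u\le\hat v$ on $U$, the point is that $\hat u$ itself is an increasing limit of functions in $\mathcal P$ (using that $\mathcal P$ is stable under composition with smooth convex increasing functions, exactly the crucial assumption on $\mathcal P$), hence $\hat u|_U$ is a legitimate candidate in the envelope defining $\hat v$ provided $\hat u\le v$ on $\partial U\cup(U\cap K)$. But this holds by the very definition $v:=\hat u|_{\partial U\cup(U\cap K)}$. More carefully, since $\hat v$ is defined as a supremum over $v'\in\mathcal P|_U$ with $v'\le v$ on the relevant set, and $\hat u$ is a monotone limit $\hat u=\lim\uparrow v_n$ with $v_n\in\mathcal P$, one has $v_n\le\hat u=v$ on $\partial U\cup(U\cap K)$, so each $v_n$ is admissible for $\hat v$, giving $v_n\le\hat v$ on $U$ and hence $\hat u\le\hat v$ on $U$ in the limit. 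Combined with the first part, $\hat v=\hat u$ on $U$.

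The in-particular statement is then immediate: if $\varphi\in\mathcal P$ with $\varphi\le\hat u$ on $\partial U\cup(U\cap K)$, then $\varphi|_U$ is admissible for $\hat v$, so $\varphi\le\hat v=\hat u$ on $U$.

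I expect the main obstacle to be the honest verification that the constructions of Theorem \ref{T:Jensen} (Jensen measures supported in $K$, and currents $T_x\in\mathcal P^0$ with $i\ddbar T_x=\mu_x-\delta_x$ and compact support) go through verbatim for the abstract cone $\mathcal P$ in place of $\mathcal P_\Gamma$; the text asserts "the previous theory is valid for $\mathcal P$," so the real content is confirming the two hypotheses actually used — that $\mathcal P$ is closed under convex increasing composition (so that envelopes are monotone limits of elements of $\mathcal P$ and the supremum defining $\hat u$ can be replaced by such a composition) and that $\widehat K_P$ is compact (so the Hahn–Banach/compactness argument in the proof of Theorem \ref{T:Jensen} applies). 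A secondary subtlety is the measurability/semicontinuity bookkeeping needed to guarantee $\supp\mu_x\subset K$ rather than merely $\subset\widehat K_P$; this uses the hypothesis that every point of $K$ is in the Jensen boundary for $\mathcal P$, which is implicitly part of the standing setup but should be stated explicitly when invoked.
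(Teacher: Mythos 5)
Your strategy coincides with the paper's: everything reduces to the ``in particular'' statement, which you attack by producing, for $x\in U\setminus K$, a Jensen measure $\mu_x$ on $K$ and a current $T_x\in\mathcal P^0$ with $i\ddbar T_x=\mu_x-\delta_x$, and then applying the maximum principle; the easy inequality $\hat u\le\hat v$ is handled as you do. There is, however, a gap at the decisive step. To conclude $\varphi(x)\le\hat u(x)$ you must use part (ii) of the maximum principle of Subsection 3.2 (part (i) only bounds $\varphi$ by $\sup_{\partial U\cap\supp T_x}\varphi$, which is not a pointwise comparison with $\hat u$), and part (ii) requires the majorant $\hat u$ to be a bounded \emph{$T_x$-harmonic} function. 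You never verify this, and for the arbitrary Jensen measure you choose it is in general false. The harmonicity $i\ddbar \hat u\wedge T_x=0$ is the content of Theorem 4.1, and its proof uses precisely that $\mu_x$ is the extremal Choquet--Edwards measure with $\hat u(x)=\int u\,d\mu_x$: it is the identity $\int\hat u\,d\mu_x-\hat u(x)=0$ that forces $\langle T_x, i\ddbar v_n\rangle\to 0$ for an increasing sequence $v_n$ in $\mathcal P$ converging to $\hat u$. The paper's proof invokes exactly this (``Let $T_x$ be as in Theorem 4.1. Since $\hat u$ is $T_x$-harmonic\dots''). Your argument is repaired by taking $\mu_x$ extremal and citing Theorem 4.1 rather than only the analogue of Theorem \ref{T:Jensen}; as written, the sentence ``apply the maximum principle \dots we conclude $\varphi(x)\le\hat u(x)$'' does not follow.

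Two smaller points. First, $\mu_x$ is supported on $K$, not on $K\cap U$; what saves the application of the maximum principle is that only $U\cap\supp\mu_x\subset U\cap K$ enters its hypotheses. Second, your aside that $\varphi$ becomes a competitor for the envelope defining $\hat u$ after checking domination on $K\cap U$ is not correct: a competitor must satisfy $\varphi\le u$ on all of $K$, and on $K\setminus \overline U$ nothing is assumed about $\varphi$ --- which is exactly why the detour through $T_x$ is unavoidable, as you in effect recognize by not pursuing that route. The reverse inequality $\hat u\le\hat v$ and the final ``in particular'' deduction are fine.
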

\begin{proof}
Let $\varphi\in\mathcal P.$ Suppose  $\varphi\leq \hat u$
on $\partial U\cup (U\cap K).$   We have to  show that  $\varphi\leq \hat u$ on $U.$ Let $x\in U\setminus K $ and  $\mu_x$ a Jensen
measure for $x$ with support  on $K.$ Let $T_x\in\mathcal P^0,$  $T_x\geq 0,$ $i\ddbar T_x=\mu_x-\delta_x,$  be as in Theorem 4.1.
 Since $\hat u$ is $T_x$-harmonic and   $\varphi\leq \hat u$ on $\partial U\cup (U\cap K),$  the maximum principle implies that $\varphi(x)\leq \hat u(x).$
\end{proof}

The  following result  was  proved in the  context of function algebras in  \cite{GamelinSibony}.
\begin{theorem}
Let $u$  be a  continuous function on  $\widehat{K}_P.$ Assume that  every point of $K$ is  a Jensen boundary point for $\mathcal P.$
Suppose that for every  $x\in \widehat{K}_P,$ there is  a neighborhood $U$  such that  on $U,$  $u$ is   a  supremum  of functions in 
$\mathcal P|_U.$ Then $u$ is   a  supremum on  $\widehat{K}_P$  of functions in $\mathcal P.$
\end{theorem}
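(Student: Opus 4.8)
The plan is to identify $u$, on $\widehat{K}_P$, with the Perron envelope
$$
\hat u(x):=\sup\bigl\{\,v(x):\ v\in\mathcal P,\ v\le u\ \text{on}\ K\,\bigr\}.
$$
Once $\hat u=u$ on $\widehat{K}_P$ is known we are done, since then every admissible $v$ above satisfies $v\le\hat u=u$ on all of $\widehat{K}_P$, so $u=\hat u$ exhibits $u$ as a supremum of functions of $\mathcal P$. Note first that $K$ contains the Shilov boundary of $\widehat{K}_P$ (every $v\in\mathcal P$ attains its maximum over $\widehat{K}_P$ on $K$), so the Choquet--Edwards theorem applies with $X=K$; combined with the hypothesis that every point of $K$ is a Jensen boundary point it gives $\hat u$ lower semicontinuous, $\hat u=u$ on $K$, and $\hat u(x)=\inf\{\int u\,d\mu_x\}$ over Jensen measures $\mu_x$ for $x$ supported in $K$.

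I would first dispose of the inequality $u\le\hat u$ on $\widehat{K}_P$. The local hypothesis forces $i\ddbar u\wedge T\ge 0$ for every $T\in\mathcal P^0$: this is a local assertion, and near any point $u$ is a supremum of functions $v_\alpha\in\mathcal P|_U$, each of which satisfies $i\ddbar v_\alpha\wedge T\ge 0$, a property preserved under suprema after the standard regularization. Then, given $x\in\widehat{K}_P$ and a Jensen measure $\mu_x$ on $K$, the $\mathcal P$-version of Theorem \ref{T:Jensen} provides $T_x\in\mathcal P^0$, $T_x\ge 0$ with compact support and $i\ddbar T_x=\mu_x-\delta_x$; Jensen's formula then yields $\int u\,d\mu_x-u(x)=\langle T_x,i\ddbar u\rangle\ge 0$, and the infimum over $\mu_x$ gives $\hat u(x)\ge u(x)$.

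The heart of the matter is the reverse inequality $\hat u\le u$, the localization proper. I would cover the compact set $\widehat{K}_P$ by finitely many open sets $U_1,\dots,U_N$ on which $u$ is a supremum of functions of $\mathcal P|_{U_i}$, and proceed by induction on $N$, bringing in one $U_i$ at a time. The induction rests on two earlier facts. First, Proposition \ref{P:Dirichlet}: on any open $U\subset\widehat{K}_P$ the restriction of $\hat u$ to $U$ is the $\mathcal P|_U$-Monge--Amp\`ere solution with boundary data $\hat u|_{\partial U\cup(U\cap K)}$, so a local description of $u$ on $U$ translates into control of $\hat u$ there. Second, the closure of $\mathcal P$ under $H(v_1,\dots,v_m)$ for $H$ smooth, convex and increasing in each variable, which permits replacing an ordinary maximum of two functions of the cone, on the region where it is needed, by a regularized maximum still lying in $\mathcal P$. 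Concretely, to bound $\hat u(x_0)$ from above with $x_0\in U_1$, I choose $v_1\in\mathcal P|_{U_1}$ with $v_1(x_0)$ close to $\hat u(x_0)=u(x_0)$ and with $v_1$ strictly below $\hat u=u$ on a thin shell inside $U_1$; I then splice $v_1$, pushed slightly downward, with a global function of $\mathcal P$ dominated by $u$ on $\widehat{K}_P$ and supplied by the inductive hypothesis on $U_2\cup\cdots\cup U_N$, gluing the two across the shell by a regularized maximum so the result remains in $\mathcal P$ and never exceeds $u=\hat u$. Running this over the cover and letting the downward shifts tend to $0$ produces, for each $\varepsilon>0$, some $v\in\mathcal P$ with $v\le u$ on $\widehat{K}_P$ and $v(x_0)>u(x_0)-\varepsilon$; hence $\hat u(x_0)\le u(x_0)$.

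I expect the technical obstacle to lie entirely in this splicing step: arranging the strict inequalities on the overlap shells so that the regularized maxima genuinely stay in $\mathcal P$ (and not merely in $\overline{\mathcal P}$) while remaining $\le u$ throughout $\widehat{K}_P$, and keeping the bookkeeping of the finitely many cutoffs and shifts uniform enough to pass to the limit. The remaining ingredients --- the Choquet--Edwards theorem, the currents $T_x$ with Jensen's formula, the maximum principle, and Proposition \ref{P:Dirichlet} --- are all in place from the earlier sections.
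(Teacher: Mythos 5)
The first problem is with your reduction. You define
$\hat u(x):=\sup\{v(x):\ v\in\mathcal P,\ v\le u\ \text{on}\ K\}$ and announce that the theorem amounts to $\hat u=u$ on $\widehat{K}_P$. With the boundary data imposed only on $K$ this identity is false: take $K=\partial B$ the unit sphere, $\mathcal P=\mathcal P_0$, $u=|z|^2$ (all hypotheses of the theorem hold); the constant $1$ is an admissible competitor, so $\hat u(0)\ge 1>0=u(0)$. The envelope relevant to this statement is the one the paper uses, namely $\sup\{v(x):\ v\in\mathcal P,\ v\le u\ \text{on}\ \widehat{K}_P\}$, for which $\hat u\le u$ is trivial and the entire content is $u\le\hat u$. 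So your ``reverse inequality $\hat u\le u$,'' which you call the heart of the matter, is not a true statement as posed; and the splicing construction is in any case aimed at the other direction --- what it would produce is a global $v\in\mathcal P$ with $v\le u$ on $\widehat{K}_P$ and $v(x_0)>u(x_0)-\varepsilon$, i.e.\ a lower bound $u\le\hat u$ for the corrected envelope, not a bound on arbitrary competitors.

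The splicing itself also breaks down, and the obstruction is not merely technical. To glue $v_1\in\mathcal P|_{U_1}$ to a global $v_2$ by a regularized maximum you need $v_1<v_2$ on a full open shell of $\C^k$ around $\partial U_1$, so that the regularized maximum coincides with $v_2$ there and the piecewise definition is $\mathcal C^2$ and stays in $\mathcal P$; but the only comparison you control ($v_1\le u$ and $v_2$ close to $u$) lives on the compact set $\widehat{K}_P$, which in general has empty interior, and off the hull $v_1$ and $v_2$ satisfy no ordering. Moreover the induction is not well founded: $U_2\cup\dots\cup U_N$ need not contain $\widehat{K}_P$, so the inductive hypothesis supplies no global minorant to splice with. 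The paper proves localization by an entirely different device, in the spirit of Rossi and Gamelin--Sibony: assuming $\alpha=\sup(u-\hat u)>0$, one considers the compact set $E$ where the supremum is attained, picks $z_0\in E$ in the Jensen boundary of $\mathcal P|_E$, takes $v_0\in\mathcal P$ with $v_0(z_0)>0$ and $v_0<0$ on $E\setminus U$, and applies Proposition \ref{P:Dirichlet} to $c(u-\alpha)+v_0$ on the neighborhood $U$ to reach a contradiction; no gluing is required. Your first step (Jensen measures, the currents $T_x$, and $\langle T_x,i\ddbar u\rangle\ge 0$) is the germ of a possible alternative proof of $u\le\hat u$ once the envelope is corrected and Choquet--Edwards is applied with $X=\widehat{K}_P$, but it then carries the unaddressed burden of justifying the pairing of $i\ddbar u$ against $T_x$ for a function that is only locally an uncountable supremum of elements of $\mathcal P$; the exceptional-set argument avoids this entirely.
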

\begin{proof}
Define for $z\in \widehat{K}_P,$
$$
\hat u(z):=\sup\left\lbrace v(z):\  v\in\mathcal P,\quad v\leq u\quad\text{on}\quad   \widehat{K}_P \right\rbrace  .
$$
We want to show that $\hat u=u.$ The  Choquet-Edwards Theorem implies that $\hat u=u$ on $K.$

Let $\alpha:=\sup\{ u(z)-\hat u(z):\ z\in   \widehat{K}_P\}$  and $E:=\{z:\  z\in   \widehat{K}_P:\ u(z)-\hat u(z)=\alpha  \}.$
Suppose $\alpha >0.$ We need to find  a  contradiction. The  set $E$ is compact. Let $z_0\in E$ be  a  point in the Jensen  boundary of
 $\mathcal P|_E.$ By hypothesis, there is  a  neighborhood $U$ of $z_0$ such that for $z\in U,$
 $u(z)=\sup\{v(z):\quad v\in\mathcal P|_U \}.$ Since $z_0$ is  in the Jensen  boundary of $\mathcal P|_E,$ there is $v_0\in\mathcal P,$
with $v_0<0$ on $E\setminus U,$ and $v_0(z_0)>0.$

 We can choose $c>0$ large  enough such that
$-c\alpha+\sup v_0<0$ on  $\widehat{K}_P$ and $c(u-\hat u-\alpha)+v_0<0$ on $\partial U.$ So  $c(u-\alpha)+v_0<cu$ on  $\widehat{K}_P$  and $c(u-\alpha)+v_0<c\hat u$ on $\partial U.$

The function, $c(u-\alpha)+v_0$ is  a supremum on $U$ of functions on $\mathcal P|_U$ and it is  dominated by $c\hat u$  on $\partial U.$
We can apply Proposition   \ref{P:Dirichlet} for $K= \widehat{K}_P.$ We get that
$c(u-\alpha)+v_0<c\hat u$ on $ U.$ This contradicts that $v(z_0)>0$ and $c(u-\alpha-\hat u)(z_0)=0.$
\end{proof}


\section{Extension of directed currents}


The problem of extension of positive closed currents through analytic varieties or through complete pluripolar set has been studied quite extensively,
see \cite {Skoda},\cite {ElMir}, \cite{Sibony2}, \cite{DS1}. 

 Here we replace , the classical complete pluripolar  sets, by complete pluripolar sets with respect to
$\mathcal P.$ A positive current 
 $T$ of bi-degree $(p,p)$ is $\mathcal P$-directed iff  the current $T \wedge i\ddbar u, $ is positive for every $u$ in $\mathcal P.$ 
 We obtain extension results for positive $\Gamma$-directed currents, $\ddbar$-closed and of bounded mass in the complement of a closed complete  $\mathcal P_\Gamma$-pluripolar set.

In this section we  first discuss the notion of  pluripolar sets with respect to a cone $ \mathcal P.$ 
We then we give few  extension results, see Theorem 6.4.

On an open connected set $U$ in $\C^k,$ we consider a cone $\mathcal P$ of smooth functions, as in section 5. The two main cases, we have in mind are the cones  $\mathcal P_\Gamma$ considered in section 3, and the cones
 $\mathcal P_R,$ associated to a positive current $R$, as in section 5. We define
$$
\overline{\mathcal P}:=\left\lbrace u:\  u=\lim\downarrow  u_\epsilon,\  u_\epsilon\in  \mathcal P \right\rbrace .
$$ 
We will consider only functions  which are not identically $-\infty\ .$ 

 A set $F$ in $U$ is  $\mathcal P$-pluripolar, if it is contained in  $ \{z\in U:\  v(z)=-\infty\}, $  where $v$  is a function   in $\overline{\mathcal P}. $ The set $F$ is complete $\mathcal P$-pluripolar, if $F = \{z\in U:\  v(z)=-\infty\}.$ We can also define the notions of locally $\mathcal P$-pluripolar, or locally  complete $\mathcal P$-pluripolar.
\begin{example} \rm 
Let $\rho$  be  a  smooth non-negative function in $U.$ Let $\Gamma_z:=\ker \partial \rho (z).$  Assume that the function
 $\rho$  is in $\mathcal P$ , i.e.  $  i\ddbar  \rho\wedge  i\partial \rho\wedge \overline\partial \rho $ is positive.
 Then the set $ \rho=0,$ is  $\mathcal P$-pluripolar. It is associated to the function $\log(\rho),$ which is in 
$\overline{\mathcal P}. $ Similarly for any  $c>0$ the set $( \rho\leq c),$ is  $\mathcal P$-pluripolar. It is associated to the function $(\rho-c)^+.$
  So the $\mathcal P$-pluripolar sets, can be quite different from the classical ones.
\end{example}

As in \cite{Sibony2}, we have the following result.

\begin{proposition} \label{prop_pluripolar}
Let $F$ be a complete $\mathcal P$-pluripolar set in $U.$ For any $V\Subset U,$ there is a sequence $(u_n)$ of 
functions in  $\mathcal P|_V,$  $0\leq u_n\leq 1,$ vanishing in a neighborhood of $ V\cap F,$  and such that, $(u_n)$
converges pointwise to 1 in $U\setminus F$.  When $F$ is closed we can choose $(u_n)$ increasing.
\end{proposition}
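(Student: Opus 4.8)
The plan is to follow the classical template for psh functions from \cite{Sibony2}. By definition of complete $\mathcal P$-pluripolarity there is $v\in\overline{\mathcal P}$ with $F=\{z\in U:\ v(z)=-\infty\}$; write $v=\lim\downarrow v_j$ with $v_j\in\mathcal P\subset\mathcal C^2$. Being a decreasing limit of continuous functions, $v$ is upper semicontinuous and $<+\infty$ everywhere, so $\sup_{\overline V}v<+\infty$; subtracting a large constant (which keeps $v$ in $\overline{\mathcal P}$ and leaves $F$ unchanged, since $\mathcal P$ is a convex cone containing the constants) I may assume $v<-2$ on a neighbourhood of $\overline V$. The one point I isolate first is that, again by upper semicontinuity, for every $n$ the set $\{v<-n\}$ is \emph{open}; it contains $\{v=-\infty\}=F$, hence it is an open neighbourhood of $V\cap F$.

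Now I set
$$
u_n:=\max\!\bigl(0,\ 1+\tfrac1n v\bigr)\qquad\text{on }V .
$$
Since $t\mapsto\max(0,1+t/n)$ is convex and non-decreasing, $u_n\in\overline{\mathcal P}|_V$. I would then verify the four required properties straight from the formula: (i) $0\le u_n\le 1$ on $V$, because $v<0$ there gives $1+v/n<1$; (ii) $u_n\equiv0$ on $\{1+v/n\le 0\}=\{v\le -n\}\supseteq\{v<-n\}$, which by the remark above is an open neighbourhood of $V\cap F$; (iii) $(u_n)$ is increasing, because $1+v/(n+1)\ge 1+v/n$ wherever $v\le 0$ and $\max(0,\cdot)$ is monotone; (iv) if $x\in V\setminus F$ then $v(x)\in\R$, so $1+v(x)/n\to 1$ and $u_n(x)\to 1$, while on $V\cap F$ every $u_n$ vanishes. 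This already proves the statement, and the ``increasing'' clause is automatic from the shape of $u_n$.

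It remains to obtain genuinely smooth representatives $u_n\in\mathcal P|_V$ when these are wanted, and this is exactly where the closedness of $F$ enters. If $F$ is closed then $K:=\overline V\cap F$ is compact and $v_j\downarrow-\infty$ on $K$; since $\{v_j<-n-1\}\uparrow\{v<-n-1\}\supseteq K$, compactness lets me choose $j_n\uparrow\infty$ with $v_{j_n}<-n-1$ on $K$ and $v_{j_n}<0$ on $\overline V$. Replacing $v$ by $v_{j_n}$ in the formula, i.e.\ setting $u_n:=\chi_n(v_{j_n})$ with $\chi_n$ a smooth convex non-decreasing function equal to $0$ on $(-\infty,-n-1]$ and to $t/n+1$ on $[-n+1,+\infty)$, gives smooth $u_n\in\mathcal P|_V$ (using the standing assumption that $\mathcal P$ is stable under composition with convex increasing functions, recalled in Sections 3 and 5), still satisfying (i), (ii) — it vanishes on the open set $\{v_{j_n}<-n-1\}\supseteq K$ — and (iv); monotonicity is then recovered by passing to the running maxima $\max(u_1,\dots,u_n)$ (a regularized maximum if strict smoothness is insisted upon), which still vanish on the common neighbourhood $\bigcap_{k\le n}\{v_{j_k}<-k-1\}\supseteq K$ of $V\cap F$. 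For $F$ not closed one keeps the non-smooth $v$ in the formula, since $V\cap F$ is then only a $G_\delta$ set and only the super-level sets of $v$ itself — not those of a fixed smooth approximant $v_j$ — are known to contain all of $V\cap F$.

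\textbf{Main obstacle.} The only genuinely delicate point is reconciling ``$u_n$ vanishes on a \emph{neighbourhood} of $V\cap F$'' with smoothness of $u_n$: vanishing on an open neighbourhood is free for the usc function $v$ because $\{v<-n\}$ is open, but it is lost when $v$ is replaced by a smooth approximant $v_j$, whose super-level sets need not swallow all of $V\cap F$ unless that set is relatively compact, i.e.\ unless $F$ is closed. The bound $0\le u_n\le1$, the limit $1$ off $F$, and the monotonicity are all immediate consequences of the explicit formula $u_n=\max(0,1+v/n)$.
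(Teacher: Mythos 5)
Your construction is essentially the paper's: both proofs take the defining potential $v=\lim\downarrow v_j\in\overline{\mathcal P}$, normalize it to be negative on $\overline V$, and post-compose with convex increasing cut-off functions that vanish on a deep sublevel set and tend to the identity near $0$ (you use $\max(0,1+t/n)$ and its smoothings $\chi_n$; the paper uses $h(\exp(t/n_j))$ with $h$ convex increasing vanishing near $0$ --- the same device). The differences are in execution. For the closed case the paper first builds a single potential in $\overline{\mathcal P}$ that is smooth off $F$ and then composes, which makes smoothness and monotonicity of the $u_n$ automatic; you instead keep the approximants $v_{j}$, use Dini's theorem on the compact set $\overline V\cap F$ to extract a diagonal subsequence $v_{j_n}$, and restore monotonicity by running (regularized) maxima. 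Both routes work; yours avoids the unproved auxiliary construction of a potential smooth off $F$, at the cost of the small technicality that the regularized maximum of functions vanishing on a common open set is a positive constant there rather than $0$ (easily fixed by subtracting that constant and composing once more with a convex increasing function vanishing near $0$). You also correctly isolate the one genuine subtlety, which the paper's sketch passes over: when $F$ is not closed, no single smooth approximant $v_j$ need be uniformly large negative on all of $V\cap F$, so the composed functions are only guaranteed to lie in $\overline{\mathcal P}|_V$ rather than $\mathcal P|_V$; your proof therefore establishes the statement as literally written (smooth $u_n\in\mathcal P|_V$) only for closed $F$. Since the paper's own argument has exactly the same limitation, and the proposition is applied in Theorem 6.4 only to closed $F$, this is a defect of the statement rather than of your proof, but you should say explicitly that for non-closed $F$ your $u_n$ belong to $\overline{\mathcal P}|_V$ only.
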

\begin{proof}
Choose a sequence $(v_n)$ in  $\mathcal P,$ decreasing to a function $v$  in $\overline{\mathcal P},$ such that 
$F = \{z\in U:\  v(z)=-\infty\}.$
We can assume that  $v_n \leq 0$ on  $V.$ Let $h$ be a convex increasing function on $\R$ vanishing near $0$
and such that $h(1)=1.$ 

 Define $ u_j= h (exp(v_j/n_j)), $ with $n_j$ increasing to infinity. The sequence $ (u_j)$
satisfies the stated properties. If $F $ is closed, we can first construct a function $u$ in $\overline{\mathcal P},$ smooth out of $F$, continuous and such that  $F = \{z\in U:\  u(z)=-\infty\}.$ Then $ u_n= h (exp(u/n)), $ satisfies the required properties.
\end{proof}

We start with the following proposition which is a version of the
Chern-Levine-Nirenberg inequality \cite{ChernLevineNirenberg},
 \cite {Demailly2}.
In what follows, $\omega$ denotes a K\"ahler $(1,1)$-form on 
 $U.$  If $T$ is a current of order zero, the mass of $T$ on a Borel set
$K\subset U$ is denoted by $\|T\|_K$. If $T$ is a positive or a negative 
$(p,p)$-current, $\|T\|_K$ is equivalent to
$|\int_K T\wedge \omega^{k-p}|$. We identify these two quantities.
Observe however that the mass estimates (resp. the extension results) for 
$(p,p)$-currents, can be easily reduced to similar question for bi-dimension $(1,1)$-currents. It is enough to wedge, with an appropriate power of
$\omega.$  This is also valid for $\mathcal P$-directed currents. 

\begin{proposition} \label{prop_cln}
Let $U$ be an open subset $\C^k.$
Let $K$ and $L$ be compact sets in $U$ with $L\Subset K$. 
Assume that $T$ is a positive current  on $U$ of bi-degree $(p,p),$ $\mathcal P$-directed. Assume also, that $i\ddbar T$ has order zero. 
Then there exists  a constant $c_{K,L}>0$ such that
for every smooth function $u$ in $\mathcal P,$ we have the following estimates
$$\int_L  i\partial u\wedge \dbar u \wedge T \wedge \omega^{k-p-1}
\leq c_{K,L} \|u\|_{ \mathcal L^\infty(K)} ^2\big( \|T\|_K +  \|\  i\ddbar T\|_K \big),$$
and
$$\| i\ddbar u \wedge T\|_L\leq c_{K,L}\|u\|_{ \mathcal L^\infty(K)} \big( \|T\|_K + \|\ i\ddbar T\|_K  \big),$$
where $c_{K,L}>0$ is a constant independent of $u$ and $T$. 
\end{proposition}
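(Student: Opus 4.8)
The plan is to follow the classical Chern--Levine--Nirenberg argument, being careful to use only $\mathcal P$-directedness rather than positivity of $T$ itself. First I would reduce to the bi-dimension $(1,1)$ case by wedging $T$ with $\omega^{k-p-1}$ as remarked just before the statement, so that effectively we work with a positive $(1,1)$-current $\widetilde T = T\wedge\omega^{k-p-1}$ that is $\mathcal P$-directed and has $i\ddbar\widetilde T$ of order zero (indeed $i\ddbar\widetilde T = (i\ddbar T)\wedge\omega^{k-p-1}$ since $\omega$ is closed). I would also normalize by cutting off: choose a smooth $\chi$ with $0\le\chi\le 1$, $\chi\equiv 1$ on a neighborhood of $L$, and $\supp\chi\Subset K$, chosen once and for all, so that all integrations by parts below produce no boundary terms.

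The first, main estimate comes from expanding $i\ddbar(\chi u^2)\wedge\widetilde T$. One has the pointwise identity
$$
i\ddbar(u^2) = 2u\, i\ddbar u + 2\, i\partial u\wedge\dbar u,
$$
so that
$$
2\int \chi\, i\partial u\wedge\dbar u\wedge\widetilde T
= \int \chi\, i\ddbar(u^2)\wedge\widetilde T - 2\int \chi\, u\, i\ddbar u\wedge\widetilde T.
$$
Here the crucial point is that the term $i\partial u\wedge\dbar u\wedge\widetilde T$ is a positive measure precisely because $i\partial u\wedge\dbar u$ is a positive $(1,1)$-form and $\widetilde T$ is positive; so the left side dominates $\int_L i\partial u\wedge\dbar u\wedge\widetilde T$. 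For the first term on the right, integrate by parts twice to move $i\ddbar$ onto $\widetilde T$:
$$
\int \chi\, i\ddbar(u^2)\wedge\widetilde T = \int u^2\, i\ddbar\chi\wedge\widetilde T + \int \chi\, u^2\, i\ddbar\widetilde T
$$
(the mixed $\partial\chi\wedge\dbar(u^2)$ terms are handled the same way; alternatively write $i\ddbar(\chi u^2)\wedge\widetilde T$ and integrate by parts directly). Both resulting integrals are bounded by $\|u\|_{L^\infty(K)}^2(\|\widetilde T\|_K + \|i\ddbar\widetilde T\|_K)$ up to a constant depending only on $\chi$, hence on $K,L$. The second term on the right, $-2\int\chi u\, i\ddbar u\wedge\widetilde T$, is where $\mathcal P$-directedness enters: since $u\in\mathcal P$, the current $i\ddbar u\wedge\widetilde T$ is a \emph{positive} measure, so $|\int\chi u\, i\ddbar u\wedge\widetilde T| \le \|u\|_{L^\infty(K)}\int\chi\, i\ddbar u\wedge\widetilde T$; but $\int\chi\, i\ddbar u\wedge\widetilde T = \int u\, i\ddbar\chi\wedge\widetilde T + \int\chi u\, i\ddbar\widetilde T \le c_{K,L}\|u\|_{L^\infty(K)}(\|\widetilde T\|_K + \|i\ddbar\widetilde T\|_K)$, again by integration by parts. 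Combining everything yields the first inequality.

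For the second estimate, note that $i\ddbar u\wedge\widetilde T$ is already a positive measure (by $\mathcal P$-directedness), so its mass on $L$ is controlled by testing against $\chi$:
$$
\|i\ddbar u\wedge\widetilde T\|_L \le \int \chi\, i\ddbar u\wedge\widetilde T = \int u\, i\ddbar\chi\wedge\widetilde T + \int \chi u\, i\ddbar\widetilde T,
$$
which by the same integration-by-parts bound is at most $c_{K,L}\|u\|_{L^\infty(K)}(\|\widetilde T\|_K + \|i\ddbar\widetilde T\|_K)$. Translating $\widetilde T$ back to $T\wedge\omega^{k-p-1}$ gives the stated form with the wedge by $\omega^{k-p-1}$. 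The main obstacle, and the only place where care is genuinely needed, is the sign bookkeeping in the integration by parts: since $i\ddbar T$ is \emph{not} assumed to have a sign (only order zero), one must keep track of $\|i\ddbar T\|_K$ as a separate term everywhere and not hope for cancellation; and one must verify that all the intermediate currents against which one integrates (e.g. $i\partial u\wedge\dbar u\wedge T$, $i\ddbar u\wedge T$) are genuinely of order zero so that the pairings with the bounded-but-merely-measurable multipliers $u$, $\chi$ make sense — this is exactly guaranteed by positivity of $i\partial u\wedge\dbar u$, by $\mathcal P$-directedness of $T$, and by the order-zero hypothesis on $i\ddbar T$.
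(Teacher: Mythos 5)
Your overall strategy---reduce to bidimension $(1,1)$ via $\widetilde T=T\wedge\omega^{k-p-1}$, cut off with $\chi$, expand $i\ddbar(u^2)=2u\,i\ddbar u+2\,i\partial u\wedge\dbar u$, and use $\mathcal P$-directedness to give $i\ddbar u\wedge\widetilde T$ a sign---is the right one; the paper itself only cites \cite{DS1} and \cite{Sibony2} and asserts the classical argument adapts. The genuine gap is in your integration by parts. The only safe move (one that does not produce $\partial\widetilde T$ and $\dbar\widetilde T$, which are \emph{not} assumed to be of order zero) is the double integration by parts $\langle\widetilde T,i\ddbar\phi\rangle=\langle i\ddbar\widetilde T,\phi\rangle$ with $\phi=\chi u^2$, which gives
$$
\int\chi\,i\ddbar(u^2)\wedge\widetilde T=\langle i\ddbar\widetilde T,\chi u^2\rangle-\int u^2\,i\ddbar\chi\wedge\widetilde T-2\int u\bigl(i\partial\chi\wedge\dbar u+i\partial u\wedge\dbar\chi\bigr)\wedge\widetilde T,
$$
not the identity you display. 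The omitted cross term is emphatically \emph{not} ``handled the same way'': it involves $\dbar u$ paired against $\widetilde T$, which is not controlled by $\|u\|_{\mathcal L^\infty(K)}$, and bounding it crudely produces a constant depending on $\|du\|_{\infty}$, which the statement forbids. This cross term is the whole point of the Chern--Levine--Nirenberg argument. The standard cure is to take the cutoff in the form $\chi^2$ and apply the Cauchy--Schwarz inequality for the positive current $\widetilde T$,
$$
\Bigl|\int u\chi\,i\partial\chi\wedge\dbar u\wedge\widetilde T\Bigr|\le\Bigl(\int u^2\,i\partial\chi\wedge\dbar\chi\wedge\widetilde T\Bigr)^{1/2}\Bigl(\int\chi^2\,i\partial u\wedge\dbar u\wedge\widetilde T\Bigr)^{1/2},
$$
and then to absorb the second factor into the left-hand side of the gradient estimate; this is legitimate because $u$ is smooth, so that integral is finite a priori, and one solves a quadratic inequality in its square root.

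There is also a circularity in your ordering. To control $-2\int\chi u\,i\ddbar u\wedge\widetilde T$ you invoke the bound on $\int\chi\,i\ddbar u\wedge\widetilde T$, i.e.\ the second estimate; but the proof of the second estimate has its own omitted cross term $\int i\partial\chi\wedge\dbar u\wedge\widetilde T$, which can only be controlled by Cauchy--Schwarz against the gradient integral, i.e.\ the first estimate. The standard resolution: first replace $u$ by $u+\|u\|_{\mathcal L^\infty(K)}$ (still in $\mathcal P$ since the cone contains the constants, and the sup norm at most doubles while $\partial u$ and $i\ddbar u$ are unchanged), so that $u\ge 0$ on $K$ and the term $2\int\chi^2u\,i\ddbar u\wedge\widetilde T$ is nonnegative by $\mathcal P$-directedness and can simply be discarded from the lower bound; prove the gradient estimate by the absorption argument above; and only then deduce the mass estimate for $i\ddbar u\wedge\widetilde T$, using Cauchy--Schwarz together with the now-established gradient bound to handle its cross term. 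With these corrections your plan becomes the proof of \cite{DS1}.
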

The Proposition is proved in \cite {DS1} for psh functions, see also \cite{Sibony2}. The proof can be easily adapted 
to functions in $\mathcal P$ and to  $\mathcal P$-directed currents. 

\begin{theorem} \label{th_extension}
Let $F$ be a
closed subset in $U$. Let $T$ be a positive
$(p,p)$-current $\mathcal P$- directed on $U\setminus F$.
Assume that $F$ is  locally complete  $\mathcal P$-pluripolar and that
$T$ has locally finite mass near $F$. Assume also that there exists a positive 
$(p+1,p+1)$-current $S$ with locally finite mass near $F$ such that $ i\ddbar T\leq S$ on $U\setminus
F$. Then $i\ddbar T$ has locally finite mass near $F$.  If 
$\widetilde T$ and $\widetilde{\ddbar T}$ denote
the extensions by zero of $T$ and $\ddbar T$ on $U$, then  the current 
$i\widetilde{\ddbar T} - i\ddbar \widetilde T$ is positive. If moreover the current $dT,$ is of order zero
and of bounded mass near $F,$ then  $\widetilde{d T}= d  \widetilde{T}.$
\end{theorem}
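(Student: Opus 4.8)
The plan is to follow the classical scheme for extending positive currents through pluripolar sets, using the cutoff functions produced by Proposition \ref{prop_pluripolar} together with the Chern–Levine–Nirenberg type bounds of Proposition \ref{prop_cln}, but now with everything taking place in the $\mathcal P$-directed category. Fix $V\Subset U$ on which $F\cap V$ is complete $\mathcal P$-pluripolar, and let $(u_n)$ be the increasing sequence of functions in $\mathcal P|_V$ with $0\le u_n\le 1$, vanishing near $F\cap V$, and $u_n\uparrow 1$ on $V\setminus F$, given by Proposition \ref{prop_pluripolar}. First I would show $i\ddbar T$ has locally finite mass near $F$: since $u_n T$ is a globally defined (compactly supported in $V$) positive $(p,p)$-current and $i\ddbar T\le S$ on the support of $du_n$, one integrates by parts to bound $\langle u_n\, i\ddbar T,\omega^{k-p-1}\rangle$ in terms of $\|T\|$, $\|S\|$ and the $\mathcal P$-directed CLN estimate of Proposition \ref{prop_cln} applied to the $u_n$; the point is that $i\partial u_n\wedge\dbar u_n\wedge T$ and $i\ddbar u_n\wedge T$ are controlled independently of $n$ by $\|T\|_K$ and $\|i\ddbar T\|_K=\|i\ddbar T\|_{K\setminus F}$, which is finite because $0\le i\ddbar T + (\text{something})\le S$ off $F$ after splitting $i\ddbar T$ into its positive and negative parts using $i\ddbar T\le S$. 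Passing to the limit (monotone convergence, since $u_n\uparrow 1$) gives that $\widetilde{\ddbar\ddbar T}$, i.e. the trivial extension of $i\ddbar T$, has locally finite mass near $F$.

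Next I would identify the comparison between $i\widetilde{\ddbar\partial T}$ (trivial extension of $i\ddbar T$) and $i\ddbar\widetilde T$ (distributional $i\ddbar$ of the trivial extension of $T$). For a test form $\phi\ge 0$ one writes $\langle i\ddbar\widetilde T,\phi\rangle = \langle \widetilde T, i\ddbar\phi\rangle = \lim_n\langle u_n T, i\ddbar\phi\rangle$, and integrating by parts twice moves the $i\ddbar$ onto $u_n$: the cross terms $2\,i\partial u_n\wedge\dbar\phi\wedge T$ and $\phi\, i\ddbar u_n\wedge T$ appear along with $\phi\, u_n\, i\ddbar T$. The term $\phi\, u_n\, i\ddbar T$ converges to $\langle i\widetilde{\ddbar\partial T},\phi\rangle$. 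The cross term $i\ddbar u_n\wedge T$ is $\ge 0$ because $T$ is $\mathcal P$-directed and $u_n\in\mathcal P$ — this is exactly where the $\mathcal P$-directedness is used — and by the CLN estimate its mass stays bounded, so after extracting a weak limit one gets a positive current; the first-order term $i\partial u_n\wedge\dbar\phi\wedge T$ is handled by Cauchy–Schwarz against the bounded quantity $\int i\partial u_n\wedge\dbar u_n\wedge T\wedge\omega^{k-p-1}$, which tends to zero because $u_n$ is eventually constant on any fixed compact away from $F$ and the total mass contribution near $F$ is uniformly small (a standard argument: replace $u_n$ by $u_n - u_m$ and let $m\to\infty$ first). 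The upshot is $i\widetilde{\ddbar\partial T} - i\ddbar\widetilde T = \lim_n i\ddbar u_n\wedge T \ge 0$, which is the asserted positivity.

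Finally, for the statement about $d$: if $dT$ has order zero and locally bounded mass near $F$, then $\widetilde{dT}$ is well defined, and one shows $\widetilde{dT} = d\widetilde T$ by the same cutoff computation at the level of a single $d$: $\langle d\widetilde T,\psi\rangle = \lim_n\langle u_n T, d\psi\rangle = \lim_n\big(\langle u_n\, dT,\psi\rangle \pm \langle du_n\wedge T,\psi\rangle\big)$, and the term $du_n\wedge T$ tends to zero in mass because $\int |du_n\wedge T|$ is dominated, via Cauchy–Schwarz, by $\big(\int i\partial u_n\wedge\dbar u_n\wedge T\wedge\omega^{k-p-1}\big)^{1/2}\big(\|T\|\big)^{1/2}$ restricted to the shrinking neighborhood of $F$ where $du_n\ne 0$, hence is small by the same uniform smallness used above.

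\textbf{Main obstacle.} The delicate point is not the algebra of integration by parts but the uniform control of the cross terms $\int i\partial u_n\wedge\dbar u_n\wedge T\wedge\omega^{k-p-1}$ and their vanishing concentration near $F$: one must arrange the sequence $(u_n)$ so that these integrals, localized to $\{du_n\ne0\}$, tend to $0$. This is where completeness of the $\mathcal P$-pluripolar set $F$ (not merely $\mathcal P$-pluripolarity) is essential — it lets one take $u_n = h(\exp(u/n))$ for a single $u\in\overline{\mathcal P}$ with $\{u=-\infty\}=F$, so that $\{du_n\ne 0\}$ shrinks to $F$ and the CLN bound of Proposition \ref{prop_cln}, applied on a fixed compact neighborhood $K$ of $F$, gives a bound independent of $n$; then a diagonal/telescoping argument forces the near-$F$ contribution to $0$. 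The rest is bookkeeping with Proposition \ref{prop_cln} and weak compactness of masses.
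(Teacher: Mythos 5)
Your plan follows essentially the same route as the paper: the same cutoffs $u_n$ from Proposition \ref{prop_pluripolar}, the same integration-by-parts identity producing the term $i\ddbar u_n\wedge T\ge 0$ (exactly where $\mathcal P$-directedness enters), the CLN-type bounds of Proposition \ref{prop_cln} to kill the cross terms $\partial u_n\wedge T$, and the domination $i\ddbar T\le S$ to control signs. The one point to tighten is the finiteness of $\|i\ddbar T\|$ near $F$: it should be obtained as a consequence of the limit identity (both $u_n(i\ddbar T-S)$ and $-\,i\ddbar u_n\wedge T$ are negative currents whose sum converges to the finite-mass current $-\widetilde S+i\ddbar \widetilde T$), rather than quoted as an input $\|i\ddbar T\|_{K\setminus F}<\infty$ to the CLN estimate, which as you phrase it is circular.
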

\begin{proof}

The Theorem is proved in  \cite {DS1} when  $\mathcal P,$  is the cone of psh functions.
The needed modifications are straightforward. We just sketch the strategy. We consider the sequence 
$(u_n)$ of functions constructed in Proposition 6.2. We have $u_n T\rightarrow \widetilde T.$ The following formula is an elementary calculus identity:

$$  u_n(i\ddbar T) - i\ddbar u_n\wedge T=i\ddbar(u_nT)-i\partial( \overline\partial u_n\wedge T) + i \overline\partial ( \partial u_n\wedge T).$$
The previous identity implies that 
$$ u_n(i\ddbar T-S) - i\ddbar u_n\wedge T= -u_n S+ i\ddbar(u_nT)-i\partial( \overline\partial u_n\wedge T) + i \overline\partial( \partial u_n\wedge T).$$
One  shows, using Proposition 6.3,  that $\partial u_n\wedge T\rightarrow 0$.
Then, the right hand side converges to $-\widetilde S+i\ddbar \widetilde T$ where $\widetilde S$ is the trivial extension
by zero of $S$ on $U$. Since both terms on the 
left hand side are negative currents, their limit values are
negative. We then deduce that
$-\widetilde S+i\ddbar \widetilde T$ is negative, and hence
$i\ddbar T$ has bounded mass near $F.$
It follows that $ u_n(i\ddbar T) - i\ddbar(u_nT) =  i\ddbar u_n\wedge T +o(1). $

So, the left hand side of the previous equation, converges to
$ i\widetilde{\ddbar T}-\ddbar \widetilde T$. 
Since  $\lim i\ddbar u_n\wedge T$  is positive, it follows  that :
 $ \widetilde{ i\ddbar T} - i \ddbar \widetilde T$ is positive. The others assertions are proved similarly.

\end{proof}

\begin{corollary} \label{cor_main}
Let  $F$ be a  closed complete $\mathcal P$-pluripolar
subset of $U.$ Let $(u_n)$, be a sequence of functions in $\mathcal P,$ as constructed in Proposition 6.2. Let $T$ be a bi-degree  (p,p) positive $ \ddbar$-closed, $\mathcal P$-directed current  in $U.$ Assume $T$ has no mass on $F.$
Then, for every compact $K$ in $U$ we have that

$$\int_K i\partial u_n \wedge \overline\partial u_n \wedge T \wedge \omega^{k-p-1}
$$
and
$$\int_K i\ddbar u_n  \wedge T\wedge \omega^{k-p-1}
,$$
converge to zero, when $n$ goes to infinity.
\end{corollary}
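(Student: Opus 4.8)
The plan is to reduce to bidimension $(1,1)$, to derive the first limit from the Chern--Levine--Nirenberg estimate of Proposition~\ref{prop_cln} together with the explicit shape of the cut-off functions $u_n$, and then to obtain the second limit from the first by a Cauchy--Schwarz argument and the algebraic identity already used in the proof of Theorem~\ref{th_extension}.

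First I would wedge everything with $\omega^{k-p-1}$ and replace $T$ by $T':=T\wedge\omega^{k-p-1}$: this is a positive current of bidimension $(1,1)$, still $\overline\partial$-closed (because $\omega$ is closed), still $\mathcal P$-directed since $T'\wedge i\ddbar u=\omega^{k-p-1}\wedge(T\wedge i\ddbar u)\ge 0$ for $u\in\mathcal P$, and still with no mass on $F$. Moreover $i\partial u_n\wedge\overline\partial u_n\wedge T'$ and $i\ddbar u_n\wedge T'$ are \emph{positive} measures --- the first by positivity of $T'$ and of the form $i\partial u_n\wedge\overline\partial u_n$, the second because $u_n\in\mathcal P$ and $T'$ is $\mathcal P$-directed. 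Hence, fixing a cut-off $\chi\in\mathcal C^\infty_c(U)$ with $0\le\chi\le 1$ and $\chi\equiv 1$ on $K$, the two integrals over $K$ in the statement are bounded above by $\int\chi\,i\partial u_n\wedge\overline\partial u_n\wedge T'$ and $\int\chi\,i\ddbar u_n\wedge T'$ respectively; from now on I drop the powers of $\omega$ and write $T$ for $T'$, so that we may assume $p=k-1$.

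For the first limit I would use that the $u_n$ produced by (the proof of) Proposition~\ref{prop_pluripolar} have the form $u_n=h\!\left(\exp(v_n/n_n)\right)$, where $(v_n)\subset\mathcal P$ is a sequence of smooth functions with $v_n\le 0$ on a fixed relatively compact neighbourhood $V$ of $K$, where $h\colon[0,1]\to[0,1]$ is smooth, convex, increasing and vanishes near $0$, and where $n_n\nearrow\infty$. Then $\partial u_n=h'(\exp(v_n/n_n))\,\exp(v_n/n_n)\,n_n^{-1}\,\partial v_n$, and since $\exp(2v_n/n_n)\le 1$ on $V$ we get there $i\partial u_n\wedge\overline\partial u_n\le\|h'\|_\infty^2\,n_n^{-2}\,i\partial v_n\wedge\overline\partial v_n$. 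Wedging with $T$, integrating over $K$ and applying Proposition~\ref{prop_cln} to the function $v_n\in\mathcal P$ (legitimate since $i\ddbar T=0$ has order zero) yields, for $K\Subset K'\Subset V$,
\[
\int_K i\partial u_n\wedge\overline\partial u_n\wedge T\ \le\ \frac{\|h'\|_\infty^2}{n_n^{2}}\;c_{K',K}\,\|v_n\|^2_{\mathcal L^\infty(K')}\;\|T\|_{K'} .
\]
The main obstacle is precisely here: Proposition~\ref{prop_cln} alone only bounds the left-hand side by $c_{K',K}\|T\|_{K'}$, and the decay must be extracted from the factor $n_n^{-2}$ together with the \emph{finiteness} of the smooth functions $v_n$. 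This forces one to take the auxiliary integers $n_n$ sufficiently large, e.g. $n_n\ge n\,(1+\|v_n\|_{\mathcal L^\infty(V_n)})$ along a compact exhaustion $(V_n)$ of $U$ --- a choice that is compatible with Proposition~\ref{prop_pluripolar}, whose construction only requires $n_n\to\infty$. With $n_n$ so chosen, the right-hand side tends to $0$ for every compact $K$, which is the first assertion.

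For the second assertion, the first one gives $\partial u_n\wedge T\to 0$ and, by conjugation, $\overline\partial u_n\wedge T\to 0$ weakly: for a smooth compactly supported $(1,0)$-form $\mu$, Cauchy--Schwarz for the positive current $T$ gives $|\langle T,i\partial u_n\wedge\overline\mu\rangle|^2\le\big(\int_{\supp\mu}i\partial u_n\wedge\overline\partial u_n\wedge T\big)\big(\int_{\supp\mu}i\mu\wedge\overline\mu\wedge T\big)\to 0$. Since $T$ has no mass on $F$ and $0\le u_n\le 1$ with $u_n\to 1$ pointwise off $F$, dominated convergence gives $u_nT\to T$ weakly, hence $i\ddbar(u_nT)\to i\ddbar T=0$. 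Inserting this into the identity
\[
-\,i\ddbar u_n\wedge T=i\ddbar(u_nT)-i\partial(\overline\partial u_n\wedge T)+i\overline\partial(\partial u_n\wedge T),
\]
valid because $i\ddbar T=0$, and using that exterior differentiation is continuous for the weak topology, we obtain $i\ddbar u_n\wedge T\to 0$ weakly. As $i\ddbar u_n\wedge T\ge 0$, testing against the cut-off $\chi$ above gives $0\le\int_K i\ddbar u_n\wedge T\le\int\chi\,i\ddbar u_n\wedge T\to 0$, which completes the proof.
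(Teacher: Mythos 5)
Your proof is correct in substance but runs the two limits in the opposite order from the paper, and by a different mechanism. The paper obtains the \emph{second} limit essentially for free from the proof of Theorem \ref{th_extension}: since $T$ is $\ddbar$-closed and has no mass on $F$, it coincides with its trivial extension, so $\lim i\ddbar u_n\wedge T=\widetilde{i\ddbar T}-i\ddbar \widetilde T=0$; it then deduces the \emph{first} limit by applying the second one to the sequence $(u_n^2)$ (still of the form produced by Proposition \ref{prop_pluripolar}, since $t\mapsto t^2$ is convex increasing on $[0,1]$), using $i\ddbar (u_n^2)=2u_n\, i\ddbar u_n+2\, i\partial u_n\wedge\dbar u_n$ and the positivity of both terms against the $\mathcal P$-directed current $T$. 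You instead prove the first limit directly from Proposition \ref{prop_cln} applied to $v_n$, and then recover the second via Cauchy--Schwarz and the $\ddbar(u_nT)$ identity; that second half is sound and is essentially the proof of Theorem \ref{th_extension} specialized to the $\ddbar$-closed case, so your argument has the merit of being self-contained.

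The one point to flag is in your first half. Your bound $c\,n_j^{-2}\|v_j\|^2_{\mathcal L^\infty(K')}\|T\|_{K'}$ tends to $0$ only if the integers $n_j$ are chosen to dominate $\|v_j\|_{\mathcal L^\infty}$ along a compact exhaustion, whereas Proposition \ref{prop_pluripolar} only requires $n_j\to\infty$ (and $\|v_j\|_{\mathcal L^\infty(K')}$ does blow up when $K'$ meets $F$). So, as written, you prove the corollary for a suitably tuned sequence rather than for an arbitrary sequence ``as constructed in Proposition \ref{prop_pluripolar}''. This is harmless for the way the corollary is used, but the paper's $(u_n^2)$ trick sidesteps the issue entirely: it converts the first limit into an instance of the second, with no rate condition on $n_j$ and no appeal to the explicit formula for $u_n$. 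You should either adopt that trick or state explicitly that the $n_j$ are chosen accordingly.
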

\begin{proof}

The current  $T$ has no mass on $F$, hence it is equal to its trivial extension through $F.$ It follows that,  $\lim i\ddbar u_n\wedge T$ = $ \widetilde{ i\ddbar T} - i \ddbar \widetilde T=0 .$
So, we get the second relation in the corollary.  If we apply this relation to  the sequence $(u_n^2),$ we get the first assertion.
\end{proof}


\noindent
Nessim Sibony, Universit{\'e} Paris-Sud,\\
\noindent
and Korea  Institute For Advanced Studies, Seoul \\
\noindent
{\tt Nessim.Sibony@math.u-psud.fr},

\end{document}